\newtheorem{thm}{Theorem}
\newtheorem{lem}[thm]{Lemma}
\newtheorem{remark}[thm]{Remark}
\newtheorem{defn}[thm]{Definition}
\newtheorem{prop}[thm]{Proposition}
\numberwithin{thm}{section}
\numberwithin{equation}{section}
\newcommand\be{\begin{equation}}
\newcommand\ee{\end{equation}}
\newcommand\bea{\begin{eqnarray}}
\newcommand\eea{\end{eqnarray}}
\newcommand\bi{\begin{itemize}}
\newcommand\ei{\end{itemize}}
\newcommand\ben{\begin{enumerate}}
\newcommand\een{\end{enumerate}}
\newcommand{\kkot}[1]{ \frac{\sin \pi {#1} }{\pi {#1} } }
\newcommand{\foh}{\frac12}
\newcommand{\intii}{\int_{-\infty}^\infty}
\renewcommand{\Re}{\operatorname{\mathfrak{Re}}}
\newcommand{\sgn}[1]{\mathop{\mathrm{sgn}}(#1)}
\newcommand{\Z}{\mathbb{Z}}
\newcommand{\R}{\mathbb{R}}
\newcommand{\C}{\mathbb{C}}
\newcommand{\ord}{\mathop{\mathrm{ord}}}
\newcommand{\Avg}{\mathop{\mathrm{Avg}}}
\newcommand{\SL}{\mathrm{SL}}
\newcommand{\GL}{\mathrm{GL}}
\newcommand{\eps}{\epsilon}
\renewcommand{\sgn}{\mathrm{sgn}}
\newcommand{\hfrak}{\mathfrak{h}}
\begin{document}

\title{Low-lying zeros of Maass form $L$-functions}
\author{Levent Alpoge}
\email{alpoge@college.harvard.edu}
\address{Department of Mathematics, Harvard University, Cambridge, MA 02138 }

\author{Steven J. Miller}
\email{sjm1@williams.edu, Steven.Miller.MC.96@aya.yale.edu}
\address{Department of Mathematics, Williams College, Williamstown, MA 01267}

\subjclass[2010]{11M26 (primary), 11M41, 15A52 (secondary).}

\keywords{Low lying zeros, one level density, Maass form, Kuznetsov trace formula.}

\thanks{The first-named author was partially supported by NSF grant DMS0850577 and the second-named author by NSF grants DMS0970067 and DMS1265673. It is a pleasure to thank Andrew Knightly, Peter Sarnak, and our colleagues from the Williams College 2011 and 2012 SMALL REU programs for many helpful conversations.}

\begin{abstract}
The Katz-Sarnak density conjecture states that the scaling limits of the distributions of zeros of families of automorphic $L$-functions agree
with the scaling limits of eigenvalue distributions of classical subgroups of the unitary groups $U(N)$. This conjecture is often tested by way of
computing particular statistics, such as the one-level density, which evaluates a test function with compactly supported Fourier transform at
normalized zeros near the central point. Iwaniec, Luo, and Sarnak studied the one-level densities of cuspidal newforms of weight $k$ and level $N$. They showed in the limit as $kN \to\infty$ that these families have one-level densities agreeing with orthogonal type for test functions with Fourier transform supported in $(-2,2)$. Exceeding $(-1,1)$ is important as the three orthogonal groups are indistinguishable for support up to $(-1,1)$ but are distinguishable for any larger support. We study the other family of ${\rm GL}_2$ automorphic forms over $\mathbb{Q}$: Maass forms. To facilitate the analysis, we use smooth weight functions in the Kuznetsov formula which, among other restrictions, vanish to order $2M$ at the origin. For test functions with Fourier transform supported inside $\left(-2 + \frac{2}{2M+1}, 2 - \frac{2}{2M+1}\right)$, we unconditionally prove the one-level density of the low-lying zeros of level 1 Maass forms, as the eigenvalues tend to infinity, agrees only with that of the scaling limit of orthogonal matrices.
\end{abstract}


\maketitle

\tableofcontents

\section{Introduction}

The zeros of $L$-functions, especially those near the central point, encode important arithmetic information. Understanding their distribution has numerous applications, ranging from bounds on the size of the class numbers of imaginary quadratic fields \cite{CI,Go,GZ} to the size of the Mordell-Weil groups of elliptic curves \cite{BSD1,BSD2}. We concentrate on the one-level density, which allows us to deduce many results about these low-lying zeros.

\begin{defn} Let $L(s,f)$ be an $L$-function with zeros in the critical strip $\rho_f = 1/2 + i \gamma_f$ (note $\gamma_f \in \mathbb{R}$ if and only if the Grand Riemann Hypothesis holds for $f$), and let $\phi$ be an even Schwartz function whose Fourier transform has compact support. The \textbf{one-level density} is \begin{equation}\label{one level density}D_1(f;\phi,R)\ := \ \sum_{\rho_f} \phi\left(\frac{\log{R}}{2\pi}\gamma_f\right),\end{equation} where $R$ is a scaling parameter. Given a family $\mathcal{F}$ of $L$-functions and a weight function $w$ of rapid decay, we define the \textbf{averaged one-level density} of the family by \begin{equation} \mathcal{D}_1(\mathcal{F};\phi) \ := \  \lim_{R\to\infty} \frac1{W(\mathcal{F},R)} \sum_{f\in\mathcal{F}} w(C_f/R) D_1(f;\phi,R), \end{equation} with \begin{equation} W(\mathcal{F},R) \ := \ \sum_{f\in\mathcal{F}} w(C_f/R), \end{equation} and $C_f$ some normalization constant associated to the form $f$ (typically it is related to the analytic conductor $c_f$, e.g. $C_f = c_f$ or $c_f^{1/2}$, etc.).
\end{defn}

The Katz-Sarnak density conjecture \cite{KaSa1, KaSa2} states that the scaling limits of eigenvalues of classical compact groups near 1 correctly model the behavior of these zeros in families of $L$-functions as the conductors tend to infinity. Specifically, if the symmetry group is $\mathcal{G}$, then for an appropriate choice of the normalization $R$ we expect \begin{equation} \mathcal{D}_1(\mathcal{F};\phi) \ = \ \int_{-\infty}^\infty \phi(x) W_{1,\mathcal{G}}(x) dx  \ = \ \int_{-\infty}^\infty \widehat{\phi}(t) \widehat{W_{1,\mathcal{G}}}(t) dt, \end{equation} where $K(y) = \kkot{y}$, $ K_\epsilon(x,y) = K(x-y) + \epsilon K(x+y)$ for $\epsilon = 0, \pm 1$, and
\begin{eqnarray}\label{eqdensitykernels}
W_{1,\mathrm{SO(even)}}(x) &\ =\ &  K_1(x,x) \nonumber\\ W_{1,\mathrm{SO(odd)}}(x) & = & K_{-1}(x,x)  + \delta_0(x) \nonumber\\ W_{1,\mathrm{O}}(x) & =
& \foh W_{1,\mathrm{SO(even)}}(x) + \foh W_{1,\mathrm{SO(odd)}}(x) \nonumber\\ W_{1,\mathrm{U}}(x) & =
& K_0(x,x) \nonumber\\ W_{1,\mathrm{Sp}}(x) &=&  K_{-1}(x,x).
\end{eqnarray}
Note the Fourier transforms of the densities of the three orthogonal groups all equal $\delta_0(y) + 1/2$ in the interval $(-1,1)$ but are mutually distinguishable for larger support (and are distinguishable from the unitary and symplectic cases for any support). Thus if the underlying symmetry type is believed to be orthogonal then it is necessary to obtain results for test functions $\phi$ with ${\rm supp}(\widehat{\phi})$ exceeding $(-1,1)$ in order to have a unique agreement.

The one-level density has been computed for many families for suitably restricted test functions, and has always agreed with a random matrix ensemble. Simple families of $L$-functions include Dirichlet $L$-functions, elliptic curves, cuspidal newforms, number field $L$-functions, and symmetric powers of ${\rm GL}_2$ automorphic representations \cite{DM1,FiMi,FI,Gao,GK,Gu,HM,HR,ILS,IMT,KaSa1,KaSa2,Mil,MilPe,OS1,OS2,RR,Ro,Rub1,Rub2,ShTe,Ya,Yo}. Due\~nez and Miller \cite{DM1,DM2} handled some compound families, and recently Shin and Templier \cite{ShTe} determined the symmetry type of many families of automorphic forms on ${\rm GL}_n$ over $\mathbb{Q}$. The goal of this paper is to provide additional evidence for these conjectures for the family of level 1 Maass forms for as large support of the test function as possible.


\subsection{Background and Notation}

By $A\ll B$ we mean that $|A|\le c|B|$ for some positive constant $c$, and by $A\asymp B$ we mean that $A\ll B$ and $B\ll A$. We set \begin{equation}e(x)\ :=\ \exp(2\pi i x)\end{equation} and use the following convention for the Fourier transform: \begin{equation}\widehat{f}(\xi)\ :=\ \intii f(x)e(-x\xi) dx.\end{equation}

We quickly review some properties of Maass forms; see \cite{Iw1,IK,KL,Liu,LiuYe1,LiuYe2} for a detailed exposition and a derivation of the Kuznetsov trace formula, which will be a key ingredient in our analysis below.

Let $u$ be a cuspidal (Hecke-Maass-Fricke) eigenform on $\SL_2(\Z)$ with Laplace eigenvalue $\lambda_u=:\frac{1}{4} + t_u^2, t_u\in \C$. By work of Selberg we may take $t_u\geq 0$. We may write the Fourier expansion of $u$ as \begin{equation}u(z)\ =\ y^{1/2}\sum_{n\neq 0} a_n(u)K_{s-1/2}(2\pi |n| y)e(ny).\end{equation} Let \begin{equation}\lambda_n(u)\ :=\ \frac{a_n(u)}{\cosh(t)^{1/2}}.\end{equation} Changing $u$ by a non-zero constant if necessary, by the relevant Hecke theory on this space without loss of generality we may take $\lambda_1 = 1$. This normalization is convenient in applying the Kuznetsov trace formula to convert sums over the Fourier coefficients of $u$ to weighted sums over prime powers.

The $L$-function associated to $u$ is \begin{equation}L(s,u) \ :=\ \sum_{n\geq 1} \lambda_n n^{-s}.\end{equation} By results from Rankin-Selberg theory the $L$-function is absolutely convergent in the right half-plane ${\Re}(s) > 1$ (one could also use the work of Kim and Sarnak \cite{K,KSa} to obtain absolutele convergent in the right half-plane ${\Re}(s) > 71/64$, which suffices for our purposes). These $L$-functions analytically continue to entire functions of the complex plane, satisfying the functional equation \begin{equation}\Lambda(s,u)\ =\ (-1)^\eps\Lambda(1-s,u),\end{equation} with \begin{equation}\Lambda(s,u) \ :=\ \pi^{-s}\Gamma\left(\frac{s + \eps + it}{2}\right)\Gamma\left(\frac{s+\eps-it}{2}\right)L(s,u).\end{equation} Factoring \begin{equation}1 - \lambda_pX + X^2\ =:\ (1-\alpha_pX)(1-\beta_pX)\end{equation} at each prime (the $\alpha_p,\beta_p$ are the Satake parameters at $p$), we get an Euler product \begin{equation}L(s,u)\ =\ \prod_p (1-\alpha_pp^{-s})^{-1}(1-\beta_pp^{-s})^{-1},\end{equation} which again converges for ${\Re}(s)$ sufficiently large.

We let $\mathcal{M}_1$ denote an orthonormal basis of Maass eigenforms, which we fix for the remainder of the paper. In what follows $\Avg(A;B)$ will denote the average value of $A$ over our orthonormal basis of level 1 Maass forms weighted by $B$. That is to say, \begin{equation} \Avg(A;B)\ :=\ \frac{\sum_{u\in \mathcal{M}_1} A(u)B(u)}{\sum_{u\in \mathcal{M}_1} B(u)}.\end{equation}

\subsection{Main result}

Before stating our main result we first describe the weight function used in the one-level density for the family of level 1 Maass forms. The weight function we consider is not as general as other ones investigated (see the arguments for other families of Maass forms in \cite{AAILMZ}), but leads to a significantly simpler analysis and much greater support. In this sense our work is similar to analyses in other problems where the weight function is chosen to facilitate the application of a summation formula (for example, the use of harmonic weights for the Petersson formula). As previous work on Maass forms could not deal with test functions whose Fourier transforms are supported outside $(-1, 1)$, these calculations were insufficient to determine the underlying symmetry. As extending this support is the primary motivation for this work, we thus chose a weight function which is ideally suited for using the Kuznetsov trace formula.

As we will see below, some type of weighting is necessary in order to restrict to conductors of comparable size. While our choice does not include, say, the characteristic function of $[T, 2T]$, we are able to localize for the most part to conductors near $T$, with polynomial decay before $T$ and exponential decay beyond. By choosing such weight functions, we are able to unconditionally obtain support in $(-2, 2)$. Note this equals the best unconditional results for any family of $L$-functions, that of Dirichlet $L$-functions (support this large is known for cuspidal newforms, but only by assuming GRH for Dirichlet $L$-functions to expand the Kloosterman sums).

Let $h\in C^\infty\left(\R\right)$ be an even smooth function with an even smooth square-root of Paley-Wiener class such that $\hat{h}\in C^\infty\left(\left(-1/4, 1/4\right)\right)$ and $h$ has a zero of order at least $2M\geq 8$ at $0$. In fact, the higher the order of the zero of $h$ at $0$, the better the support we are able to obtain: this will be made precise below.

By the ideas that go into the proof of the Paley-Wiener theorem, since $\widehat{h}$ is compactly supported we have that $h$ extends to an entire holomorphic function, with the estimate \begin{equation}h(x+iy)\ \ll\ \exp\left(\frac{\pi |y|}{2}\right).\end{equation} Note also that, by exhibiting $h$ as the square of a real-valued even smooth function on the real line (that also extends to an entire holomorphic function by Paley-Wiener), by the Schwarz reflection principle we have that $h$ takes non-negative real values along the imaginary axis as well.\\

\textbf{Throughout this paper $T$ will be a large positive odd integer tending to infinity.}  \\ \

Let \begin{equation}\label{weight function}h_T(r)\ :=\ \frac{\left(\frac{r}{T}\right)h\left(\frac{ir}{T}\right)}{\sinh\left(\frac{\pi r}{T}\right)}.\end{equation} For $r\in \R$ we have \begin{equation}h_T(r)\ \ll\ \exp\left(-\frac{\pi |r|}{4T}\right).\end{equation} Further, $h_T$ extends to an entire meromorphic function, with poles exactly at the non-zero integral multiples of $iT$. Figure \ref{weight function plot} shows a plot of $h_{101}(r)$ on $[0,1000]$ for one choice of $h$. The point is that $h_T(r)$ is order $1$ for $r$ on the order of $T$, decays exponentially at infinity, and decays polynomially at zero (like e.g.\ the Maxwell-Boltzmann distribution, and many other well-known distributions).

\begin{center}
\begin{figure}[!h]
\begin{center}
 \scalebox{1}{\includegraphics{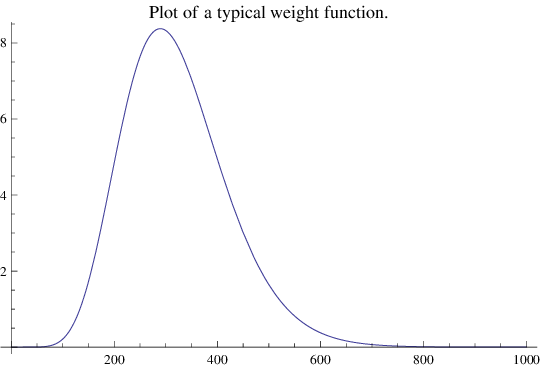}} \end{center}
	  \caption{A plot of $h_{101}$. Here $b(t) = \exp(-1/(1/100 - t^2))$ if $|t| \le 1/10$ and 0 otherwise, $h(\xi) = \xi^8 \widehat{b}(\xi)$ (where $\widehat{b}$ is the Fourier transform of $b$), and $h_T(r) = (r/T) h(ir/T) / \sinh(\pi r / T)$. \label{weight function plot}}
\end{figure}
\end{center}


In our one-level calculations we take our test function $\phi$ to be an even Schwartz function such that ${\rm supp}(\widehat{\phi}) \subset \left(-\eta,\eta\right)$ for some $\eta > 0$. The goal of course is to prove results for the largest $\eta$ possible. We suppress any dependence of constants on $h$ or $\eta$ or $\phi$ as these are fixed, but not on $T$ as that tends to infinity.

%



In computing the one-level density for the family $\mathcal{M}_1$, we have some freedom in the choice of weight function. We choose to weight $u$ by $h_T(t_u)/||u||^2$, where $t_u^2 + 1/4$ is the Laplace eigenvalue of $u$, and $||u|| = ||u||_{L^2(\SL_2(\Z)\backslash\hfrak)}$ is the $L^2$ norm of $u$. We may write the averaged one-level density as (we will see that $R\asymp T^2$ is forced) \begin{eqnarray} \mathcal{D}_1(\mathcal{M}_1; \phi) & \ = \ & \lim_{T\to\infty \atop T\ {\rm odd}} \frac1{\sum_{u \in \mathcal{M}_1} h_T(t_u)/||u||^2} \sum_{u\in\mathcal{M}_1} D_1(u;\phi,T^2) \frac{h_T(t_u)}{||u||^2}\nonumber\\ &=& \lim_{T\to\infty \atop T\ {\rm odd}} \Avg\left(D_1(u;\phi,T^2); \frac{h_T(t_u)}{||u||^2}\right).
\end{eqnarray}

Based on results from \cite{AAILMZ} and \cite{ShTe}, which determined the one-level density for support contained in $(-1, 1)$, we believe the following conjecture.

\ \\
\noindent \textbf{Conjecture:} \emph{Let $h_T$ be as defined in \eqref{weight function} and $\phi$ an even Schwartz function with $\widehat{\phi}$ of compact support. Then \be \mathcal{D}_1(\mathcal{M}_1; \phi) \ = \  \int_{-\infty}^\infty \phi(t) W_{1, \mathrm{O}}(t)dt, \ee with $W_{1, \mathrm{O}}(t) = 1 + \frac12 \delta_0$. In other words, the symmetry group associated to the family of level 1 cuspidal Maass forms is orthogonal.} \ \\


Unfortunately, the previous one-level calculations are insufficient to distinguish which of the three orthogonal candidates is the correct corresponding symmetry type, as they all agree in the regime calculated. There are two solutions to this issue. The first is to compute the two-level density, which is able to distinguish the three candidates for arbitrarily small support (see \cite{Mil}). The second is to compute the one-level density in a range exceeding $(-1,1)$, which we do here.

Before stating the main result, it is worth mentioning that padding the weight function with more zeros at $0$ allows us to increase the support to $(-2 + \eps, 2 - \eps)$ for any $\epsilon > 0$; note that we do not assume GRH. This equals the best support obtainable either unconditionally or under just GRH for any family of $L$-functions (such as Dirichlet $L$-functions \cite{FiMi,Gao,HR,OS1,OS2} and cuspidal newforms not split by sign \cite{ILS}), and thus provides strong evidence for the Katz-Sarnak density conjecture for this family. It is also worth noting that the methods employed in the proof fail at almost every stage if we have support outside $(-2,2)$, so this is indeed a natural barrier. Having said this, we may now state the main theorem.

\begin{thm}\label{main theorem}
Let $T > 1$ be an odd integer and $\phi$ an even Schwartz function with ${\rm supp}(\widehat{\phi}) \subset (-\eta, \eta)$. Let $h\in C^\infty\left(\R\right)$ be an even smooth function with an even smooth square-root of Paley-Wiener class such that $\widehat{h}\in C^\infty\left(\left(-1/4, 1/4\right)\right)$ and $h$ has a zero of order at least $2M\geq 8$ at $0$. Let $h_T$ be as defined in \eqref{weight function}. Then, for all $\eta < 2 - \frac{2}{2M+1}$, we have that \be \mathcal{D}_1(\mathcal{M}_1; \phi) \ = \  \int_{-\infty}^\infty \phi(t) W_{1, \mathrm{O}}(t)dt, \ee the density corresponding to the orthogonal group, $\mathrm{O}$. That is to say, the symmetry group associated to the family of level 1 cuspidal Maass forms is orthogonal.
\end{thm}


\subsection{Outline of proof}

We give a quick outline of the argument. We carefully follow the seminal work of Iwaniec-Luo-Sarnak \cite{ILS} in our preliminaries. Namely, we first write down the explicit formula to convert the relevant sums over zeros to sums over Hecke eigenvalues. We then average and apply the Kuznetsov trace formula to leave ourselves with calculating various integrals, which we then sum. To be slightly more specific, we reduce the difficulty to bounding an integral of shape
\begin{align}
\intii J_{2ir}(X)\frac{r h_T(r)}{\cosh(\pi r)} dr,
\end{align}
where these $J$ are Bessel functions, and $h_T$ is as in Theorem \ref{main theorem}. We break into cases: $X$ ``small'' and $X$ ``large''. For $X$ small, we move the line of integration from $\R$ down to $\R - iR$ and take $R\to +\infty$, converting the integral to a sum over residues. The difficulty then lies in bounding a sum of residues of shape
\begin{align}
T\sum_{k\geq 0} (-1)^k J_{2k+1}(X) \frac{P\left(\frac{2k+1}{2T}\right)}{\sin\left(\frac{2k+1}{2T}\pi\right)},
\end{align}
where $P$ is closely related to $h$.
To do this (after a few tricks), we apply an integral formula for these Bessel functions, switch summation and integration, apply Poisson summation, apply Fourier inversion, and then \emph{apply Poisson summation again}. The result is a sum of Fourier coefficients, to which we apply the stationary phase method one by one. This yields the bound for $X$ small.

To handle $X$ large, we use a precise asymptotic for the $J_{2ir}(X)$ term from Dunster \cite{Du} (as found in \cite{ST}). In fact, for $X$ large it is enough to simply use the oscillation of $J_{2ir}(X)$ to get cancelation. It is worth noting that the same considerations would also be enough for the case of $X$ small were the asymptotic expansion convergent.



\section{Calculating the averaged one-level density}


The starting point is to use the explicit formula to convert weighted averages of the Fourier coefficients to weighted sums over prime powers. The calculation is standard and easily modified from \cite{RS} (see also Lemma 2.8 of \cite{AAILMZ}).

\begin{lem}[Explicit formula] Let $h_T$ be as in Theorem \ref{main theorem}. Then
\begin{eqnarray}
\Avg\left(D_1(u;\phi,R); \frac{h_T(t_u)}{||u||^2}\right) & \ = \ & \frac{\phi(0)}{2} + \hat{\phi}(0)\frac{\Avg\left(\log(1 + t_u^2); \frac{h_T(t_u)}{||u||^2}\right)}{\log{R}} \nonumber\\ & & \ \ -\ \sum_p \frac{2\log{p}}{p^{1/2}\log{T}}\hat{\phi}\left(\frac{\log{p}}{2\log{T}}\right)\Avg\left(\lambda_p(u); \frac{h_T(t_u)}{||u||^2}\right) \nonumber\\ & & \ \ -\ \sum_p \frac{2\log{p}}{p\log{T}}\hat{\phi}\left(\frac{\log{p}}{\log{T}}\right)\Avg\left(\lambda_{p^2}(u);\frac{h_T(t_u)}{||u||^2}\right) \nonumber\\ & & \ \ + \ O\left(\frac{\log\log{T}}{\log{T}}\right).
\end{eqnarray}
\end{lem}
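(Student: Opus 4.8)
The plan is to run the Guinand–Weil explicit formula for each individual $L(s,u)$, then average over $u \in \mathcal{M}_1$ against the weight $h_T(t_u)/\|u\|^2$. First I would choose the test function on the spectral side to be $H(z) := \phi\!\left(\frac{\log R}{2\pi i}\left(z - \tfrac12\right)\right)$, where $\phi$ has been extended to an entire function via Paley–Wiener (legitimate since $\widehat\phi$ is compactly supported); the decay $|H(\sigma+it)| \ll (1+|t|)^{-2}$ needed to justify shifting contours follows from the Schwartz decay of $\phi$ and the compact support of $\widehat\phi$. Applying the argument principle to $\Lambda(s,u)$ on the box $-\tfrac12-\epsilon < \Re s < \tfrac32+\epsilon$ and using the functional equation $\Lambda(s,u) = (-1)^\epsilon \Lambda(1-s,u)$ together with the Euler product $\frac{L'}{L}(s,u) = -\sum_p \log p \sum_{k\ge 1}(\alpha_p^k+\beta_p^k)p^{-ks}$ converts $\sum_{\rho} H(\rho)$ into an archimedean term coming from the $\Gamma$-factors plus the prime-power sum $\frac{1}{2\pi i}\int_{(3/2)}\big(\sum_p \log p \sum_{k\ge1}\frac{\alpha_p^k+\beta_p^k}{p^{ks}}\big)\big(H(s)+H(1-s)\big)\,ds$.

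Next I would evaluate the three resulting pieces. For the archimedean piece, shifting the contour to $\Re s = \tfrac12$ and using Stirling's formula for $\frac{\Gamma'}{\Gamma}$ at $\frac{s+\epsilon \pm i t_u}{2}$ gives $\widehat\phi(0)\,\frac{\log(1+t_u^2)}{\log R} + O\!\big(\frac{1}{\log R}\big)$ — this is the conductor term, and the $t_u$-dependence is exactly what forces $R \asymp T^2$ once we average, since $h_T$ localizes $t_u$ at scale $T$. For the prime-power sum, only $k=1$ and $k=2$ survive up to $O(1/\log R)$: moving the line to $\Re s = \tfrac12$, the $k=1$ term contributes $-\sum_p \frac{2\lambda_p(u)\log p}{p^{1/2}\log R}\widehat\phi\!\left(\frac{\log p}{\log R}\right)$ after using $\alpha_p+\beta_p = \lambda_p(u)$ and the Fourier-inversion identity $H(s)+H(1-s)$ evaluated on $\Re s = \tfrac12$ picks out $\widehat\phi$; the $k=2$ term gives $-\sum_p \frac{2(\lambda_{p^2}(u)-1)\log p}{p\log R}\widehat\phi\!\left(\frac{2\log p}{\log R}\right)$ using $\alpha_p^2+\beta_p^2 = \lambda_p(u)^2 - 2 = \lambda_{p^2}(u) - 1$ (Hecke relations); and $k\ge 3$ is absorbed into the error by the Kim–Sarnak bound $|\alpha_p|,|\beta_p| \le p^{7/64}$, which keeps $\sum_{k\ge 3}$ convergent and $O(1/\log R)$. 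The stray $-1$ in the $k=2$ term, namely $\frac{2}{\log R}\sum_p \frac{\log p}{p}\widehat\phi\!\left(\frac{2\log p}{\log R}\right)$, is handled by the prime number theorem / partial summation to give $\frac{\phi(0)}{2} + O\!\big(\frac{\log\log R}{\log R}\big)$, producing the main term $\phi(0)/2$.

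Finally I would average the per-form identity over $\mathcal{M}_1$ against $h_T(t_u)/\|u\|^2$, substituting $R = T^2$ so that $\log R = 2\log T$; this rescales the arguments of $\widehat\phi$ to $\frac{\log p}{2\log T}$ and $\frac{\log p}{\log T}$ as stated, turns $\lambda_p(u)$ and $\lambda_{p^2}(u)$ into $\Avg(\lambda_p(u);\cdot)$ and $\Avg(\lambda_{p^2}(u);\cdot)$, and leaves the $\widehat\phi(0)$ term as $\widehat\phi(0)\frac{\Avg(\log(1+t_u^2);\cdot)}{\log R}$. Since the error terms are uniform in $u$ (the Kim–Sarnak bound and Stirling estimates do not depend on $u$), the $O(\log\log T/\log T)$ survives the averaging. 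The main obstacle, and the only place requiring genuine care, is controlling the higher prime-power terms ($k\ge 3$) and the contour shifts uniformly across the entire family — i.e. making sure no $u$-dependence degrades the error — but since level $1$ forces $\epsilon_u$ and the $\Gamma$-factor structure to be uniform and the Satake bound is unconditional, this is routine; the substantive work of the paper lies entirely in what comes after, namely evaluating $\Avg(\lambda_p(u);h_T(t_u)/\|u\|^2)$ via Kuznetsov.
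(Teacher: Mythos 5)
Your proposal is correct and follows essentially the same route the paper takes (the paper defers to the standard Guinand--Weil computation in [RS] and Lemma 2.8 of [AAILMZ], which is exactly your argument: the test function $H(z)=\phi\bigl(\tfrac{\log R}{2\pi i}(z-\tfrac12)\bigr)$, Stirling for the archimedean term, the Hecke identity $\alpha_p^2+\beta_p^2=\lambda_{p^2}-1$ with the stray $-1$ producing $\phi(0)/2$ via the prime number theorem, and Kim--Sarnak to discard $k\ge 3$). No discrepancies to report.
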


To prove Theorem \ref{main theorem}, it therefore suffices to show the following.

\begin{lem}\label{conductor, prime sum, and prime square sum} Let $h_T$ be as in Theorem \ref{main theorem}. Then as $T \to\infty$ through the odd integers we have
\begin{eqnarray} & & (1) \ \ \ \Avg\left(\log(1 + t_u^2); \frac{h_T(t_u)}{||u||^2}\right)\ =\ \log(T^2) + O\left(\log\log{T}\right) \nonumber\\
& & (2) \ \ \ \sum_p \frac{\log{p}}{p^{1/2}\log{T}}\hat{\phi}\left(\frac{\log{p}}{2\log{T}}\right)\Avg\left(\lambda_p(u); \frac{h_T(t_u)}{||u||^2}\right)\to 0 \ \ \ \ \ \ \ \ \ \ \ \ \ \ \ \ \ \ \ \ \ \ \ \ \ \ \ \ \ \   \nonumber\\
& & (3) \ \ \ \sum_p \frac{\log{p}}{p\log{T}}\hat{\phi}\left(\frac{\log{p}}{\log{T}}\right)\Avg\left(\lambda_{p^2}(u);\frac{h_T(t_u)}{||u||^2}\right)\to 0. \end{eqnarray}
\end{lem}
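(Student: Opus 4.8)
The plan is to treat the three claims separately, with (1) being an essentially soft density estimate and (2), (3) being the analytic heart — exactly where the Kuznetsov trace formula is brought to bear. For (1), I would first note that the denominator $\sum_{u}h_T(t_u)/\|u\|^2$ is, by the spectral (Weyl) law combined with the exponential concentration $h_T(r)\ll\exp(-\pi|r|/(4T))$, of size $\asymp T^2$ (up to constants depending only on $h$); this also explains why $R\asymp T^2$ is forced, since the conductor of $u$ is $\asymp 1+t_u^2$ and the weight localizes $t_u$ at scale $T$. Then $\Avg(\log(1+t_u^2);\,h_T/\|u\|^2)$ is a weighted average of $\log(1+t_u^2)=2\log t_u+O(1)$ over $t_u$ localized at scale $T$, so it equals $2\log T+O(1)$; the $O(\log\log T)$ slack given in the statement is far more than enough, and in fact I would just absorb everything into $O(1)$. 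The only mild care needed is that the weight is not sharply supported, so the tail $t_u\gg T\log T$ must be shown negligible, which is immediate from the exponential decay of $h_T$ against the polynomial Weyl law.

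For (2) and (3) the strategy is: fix a prime $p$, apply the Kuznetsov trace formula (in the form recorded in the references, with test function $h_T$) to $\Avg(\lambda_{p^\nu}(u);\,h_T(t_u)/\|u\|^2)$ for $\nu=1,2$. This rewrites the spectral average as a ``diagonal'' term plus a sum of Kloosterman sums $S(p^\nu,1;c)$ weighted by a Bessel transform of $h_T$, namely an integral of the shape $\int_{-\infty}^{\infty}J_{2ir}(4\pi\sqrt{p^\nu}/c)\,\frac{r\,h_T(r)}{\cosh(\pi r)}\,dr$. The diagonal term is nonzero only when $p^\nu$ is a perfect square: so for (2) it vanishes, and for (3) it contributes the ``$-1$'' already visible in the explicit formula (this is why $\lambda_{p^2}-1$ appears and accounts for the $\tfrac12\phi(0)$ in the orthogonal density) — what remains after that main term is again an off-diagonal Kloosterman/Bessel sum. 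After dividing by the normalization $\asymp T^2$ and summing over $p$ against $\frac{\log p}{p^{\nu/2}\log T}\hat\phi\!\big(\tfrac{\log p}{\nu\log T}\big)$, the support condition $\mathrm{supp}(\hat\phi)\subset(-\eta,\eta)$ restricts $p\ll R^{\eta/\nu}=T^{2\eta/\nu}$, so I must show the Bessel integrals decay fast enough in $T$ that this prime sum still tends to $0$ for $\eta<5/4$ (resp.\ the larger range under more vanishing). Per the outline, this is done by splitting the argument $X=4\pi\sqrt{p^\nu}/c$ of the Bessel function into ``$X$ small'' and ``$X$ large'': for $X$ large one exploits the oscillation of $J_{2ir}(X)$ via Dunster's uniform asymptotics (as in \cite{Du,ST}) to get cancellation, and for $X$ small one shifts the $r$-contour to $\R-iR$, picking up residues at the poles of $h_T$ (the nonzero integral multiples of $iT$), turning the integral into $T\sum_{k\ge0}(-1)^k J_{2k+1}(X)\,P(\tfrac{2k+1}{2T})/\sin(\tfrac{2k+1}{2T}\pi)$, and then estimating this via an integral representation of $J_{2k+1}$, double Poisson summation, Fourier inversion, and stationary phase — this is where the order-$M$ vanishing of $h$ at $0$ enters, allowing $M$-fold integration by parts to win the needed powers of $T$ and thereby pushing $\eta$ up to $2-\tfrac{3}{2(M+1)}$.

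I expect the main obstacle to be the ``$X$ small'' Kloosterman/Bessel estimate: the naive bound on $\int J_{2ir}(X)\frac{r h_T(r)}{\cosh\pi r}\,dr$ is too weak near the central-point-critical support $\eta=1$, and extracting the extra saving requires the delicate residue-expansion-plus-double-Poisson-plus-stationary-phase argument, with careful bookkeeping of where the vanishing order $M$ of $h$ at $0$ buys each factor of $T^{-1}$. Everything else — the explicit formula (already given), the Weyl-law normalization, the diagonal terms, and the ``$X$ large'' oscillatory estimate — is comparatively routine or a direct citation. So I would structure the proof as: (i) normalization $\asymp T^2$ and claim (1); (ii) Kuznetsov applied to the $p$- and $p^2$-sums, isolating the diagonal; (iii) Proposition on the ``$X$ large'' Bessel bound; (iv) Proposition on the ``$X$ small'' Bessel bound (the long one, where $M$ appears); (v) reassemble, use the support condition to bound the prime sum, and conclude (2) and (3), hence Theorem \ref{main theorem} and, with the $M$-fold integration by parts, Theorem \ref{main theorem with funnier weight function}.
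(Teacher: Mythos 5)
Your proposal follows essentially the same route as the paper: for (1) the normalization $\sum_u h_T(t_u)/\|u\|^2 \asymp T^2$ (the paper gets this from Kuznetsov with $m=n=1$ rather than quoting the Weyl law, but this is the same computation) together with the localization of $t_u$ at scale $T$, and for (2)--(3) the Kuznetsov formula followed by the Weil bound and the two Bessel-integral estimates --- contour shift to a residue sum plus double Poisson summation and stationary phase for $X\le T$, Dunster's uniform asymptotics for $X$ large --- with the support condition restricting $p\le T^{2\eta/\nu}$ and producing the threshold $\eta<5/4$ (and $\eta<2-\tfrac{3}{2(M+1)}$ after $M$-fold integration by parts). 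One bookkeeping caveat: in the paper's setup the ``$-1$'' has already been peeled off in the explicit formula, so part (3) concerns $\mathrm{Avg}(\lambda_{p^2})$ directly and Kuznetsov is applied with $m=p^2$, $n=1$, where the diagonal $\delta_{p^2,1}$ vanishes identically; your ``diagonal cancels the $-1$'' accounting corresponds to the equivalent choice $m=n=p$ via $\lambda_{p^2}=\lambda_p^2-1$, and either way only the off-diagonal Kloosterman/Bessel terms remain to be bounded, exactly as you say.
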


The first determines the correct scale to normalize the zeros, $R\asymp T^2$ (see \cite{Mil} for comments on normalizing each form's zeros by a local factor and not a global factor such as $T^2$ here; briefly if only the one-level density is being studied then either is fine). The third is far easier than the second. Each will be handled via the Kuznetsov trace formula (see for example \cite{IK,KL,LiuYe2}), which we now state.

\begin{thm}[Kuznetsov trace formula]\label{thm:kuznetsovtraceformula}
Let $m,n\in \Z^+$. Let $H$ be an even holomorphic function on the strip $\{x+iy\,\vert\, |y| < \frac{1}{2} + \eps\}$ (for some $\eps > 0$) such that $H(z)\ll \frac{1}{1+y^2}$. Then
\begin{align}
\sum_{u\in \mathcal{M}_1} \frac{H(t_u)}{||u||^2} \lambda_m(u) \overline{\lambda_n(u)} &= \frac{\delta_{m,n}}{\pi^2}\intii rH(r)\tanh(\pi r) dr \nonumber\\&\quad- \frac{1}{\pi}\intii m^{ir}\sigma_{ir}(m)n^{-ir}\sigma_{-ir}(n)\frac{H(r)}{|\zeta(1 + 2ir)|^2}dr \nonumber\\&\quad+ \frac{2i}{\pi}\sum_{c\geq 1} \frac{S(m,n;c)}{c}\intii J_{2ir}\left(\frac{4\pi\sqrt{mn}}{c}\right)\frac{rH(r)}{\cosh(\pi r)}dr,
\end{align}
the sum taken over an orthonormal basis of Hecke-Maass-Fricke eigenforms on $\SL_2(\Z)$, with $S$ the usual Kloosterman sum, $\sigma$ the extended divisor function and $\delta_{m,n}$ Kronecker's delta.
\end{thm}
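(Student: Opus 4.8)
The plan is to derive the formula by computing a Petersson-type inner product of two Poincaré series in two different ways, following the classical route (see \cite{IK}). Fix a weight $\psi \in C_c^\infty(\R_{>0})$ (or of sufficient decay at $0$ and $\infty$), to be pinned down at the end, and for $m \in \Z^+$ introduce the Poincaré series
\be
U_m(z)\ :=\ \sum_{\gamma \in \Gamma_\infty \backslash \SL_2(\Z)} \psi\left(\Im(\gamma z)\right)\, e\left(m\,\Re(\gamma z)\right),
\ee
where $\Gamma_\infty$ is the stabilizer of the cusp at infinity. For $\psi$ decaying suitably this converges absolutely and lies in $L^2(\SL_2(\Z)\backslash \hfrak)$. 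The whole identity will emerge from equating the spectral and geometric evaluations of $\langle U_m, U_n\rangle$, once $\psi$ is chosen so that its attached integral transform reproduces the prescribed test function $H$.

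First I would compute the geometric side. Unfolding $U_m$ against $\overline{U_n}$ collapses the domain to $\Gamma_\infty \backslash \hfrak$, and expanding $U_n$ through the Bruhat decomposition of $\SL_2(\Z)$ splits the sum according to whether the lower-left entry $c$ vanishes. The $c=0$ cell contributes the diagonal term, proportional to $\delta_{m,n}$ and, after the transform is identified, to $\intii r H(r)\tanh(\pi r)\,dr$; each $c\geq 1$ cell contributes a Kloosterman sum $S(m,n;c)$ times an oscillatory integral in the translation variable which, after the standard change of variables, evaluates to $J_{2ir}\!\left(4\pi\sqrt{mn}/c\right)$ integrated against $r H(r)/\cosh(\pi r)$. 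This reproduces exactly the $\delta_{m,n}$ term and the Kloosterman--Bessel term of the statement.

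Next I would compute the spectral side. Applying the spectral decomposition of $L^2(\SL_2(\Z)\backslash \hfrak)$ — constants, the cuspidal spectrum spanned by $\mathcal{M}_1$, and the continuous spectrum of Eisenstein series $E(z, \tfrac12 + ir)$ — together with Parseval, writes $\langle U_m, U_n\rangle$ as $\sum_{u\in\mathcal{M}_1} \langle U_m, u\rangle \overline{\langle U_n, u\rangle}$ plus a $\frac{1}{4\pi}\int$ over the Eisenstein parameter. Unfolding each $\langle U_m, u\rangle$ isolates the $m$-th Fourier coefficient of $u$ against a single Bessel transform of $\psi$; the constant-function contribution drops out since $m,n\geq 1$, so the cuspidal part yields $\sum_u \frac{\lambda_m(u)\overline{\lambda_n(u)}}{\|u\|^2}$ times that transform. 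The Eisenstein contribution comes from the explicit Fourier expansion of $E(z,\tfrac12+ir)$, whose $m$-th coefficient carries the factor $\sigma_{ir}(m)\,m^{ir}$ and whose normalization supplies the $|\zeta(1+2ir)|^2$ in the denominator, producing precisely the second term.

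The final and most delicate step is to choose $\psi$ so that its attached transform equals the given $H$. The map sending $\psi$ to the spectral weight is a Bessel-type transform (a variant of the Sears--Titchmarsh transform), and I would invoke its classical inversion to realize any admissible $H$ as the image of a suitable $\psi$. The hypotheses that $H$ be even, holomorphic on $|\Im z| < \tfrac12 + \eps$, and satisfy $H(z) \ll (1+|y|^2)^{-1}$ are exactly what is needed: holomorphy in the strip lets one evaluate $H$ at any exceptional spectral parameters $t_u \in i(0,\tfrac12)$ with control, while the decay guarantees convergence of the Eisenstein integral, of the Bessel integrals, and of the sum over $c$. The main obstacle is precisely this analytic bookkeeping — justifying absolute convergence of the Poincaré series and of the Kloosterman--Bessel sum, legitimating the interchange of the spectral sum and integral with the unfolding, and carrying out the transform inversion so that the class of admissible $\psi$ covers exactly the stated $H$. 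Once the transform is matched, equating the two expressions for $\langle U_m, U_n\rangle$ gives the identity.
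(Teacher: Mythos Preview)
The paper does not actually prove this theorem: it is stated with attribution to the standard references \cite{IK,KL,LiuYe2} and then used as a black box. Your outline (Poincar\'e series, unfolding to obtain the diagonal and Kloosterman--Bessel terms, spectral expansion into cusp forms and Eisenstein series, then inverting the Bessel-type transform to hit a prescribed $H$) is precisely the classical derivation carried out in those references, so your approach is consistent with what the paper relies on.
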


Observe that our weight function $h_T$ satisfies the hypotheses of the above theorem once $T > 1$, since the sine function has a simple zero at $0$.

Our first application of the Kuznetsov trace formula is to determine the total mass (i.e., the normalizing factor in our averaging).

\begin{lem}\label{denominator} Let $h_T$ be as in Theorem \ref{main theorem}. Then
\begin{align}
\sum_{u\in \mathcal{M}_1} \frac{h_T(t_u)}{||u||^2}\ \asymp\ T^2.
\end{align}
\end{lem}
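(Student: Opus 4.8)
The plan is to apply the Kuznetsov trace formula of Theorem \ref{thm:kuznetsovtraceformula} with $m = n = 1$ and $H = h_T$, so that $\lambda_m(u)\overline{\lambda_n(u)} = |\lambda_1(u)|^2 = 1$, reducing the sum $\sum_{u\in\mathcal{M}_1} h_T(t_u)/\|u\|^2$ to the three terms on the right-hand side: a diagonal term $\frac{1}{\pi^2}\intii r\,h_T(r)\tanh(\pi r)\,dr$, an Eisenstein term involving $|\zeta(1+2ir)|^{-2}$, and a Kloosterman term $\frac{2i}{\pi}\sum_{c\geq 1}\frac{S(1,1;c)}{c}\intii J_{2ir}(4\pi/c)\frac{r\,h_T(r)}{\cosh(\pi r)}\,dr$. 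The claim is that the diagonal term gives the main term of size $\asymp T^2$, and the other two are lower order.

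For the diagonal term, I would substitute the definition \eqref{weight function}, $h_T(r) = \frac{(r/T)h(ir/T)}{\sinh(\pi r/T)}$, and change variables $r = Tx$, giving $\frac{T^2}{\pi^2}\intii \frac{x\,h(ix)}{\sinh(\pi x)}\tanh(\pi T x)\,dx$. As $T\to\infty$, $\tanh(\pi Tx)\to \sgn(x)$ pointwise, and since $h$ vanishes to high order at $0$ and decays rapidly (Paley–Wiener), the integrand is dominated by an integrable function, so by dominated convergence this is $\frac{T^2}{\pi^2}\intii \frac{x\,h(ix)}{\sinh(\pi x)}\sgn(x)\,dx + o(T^2) = \frac{2T^2}{\pi^2}\int_0^\infty \frac{x\,h(ix)}{\sinh(\pi x)}\,dx + o(T^2)$. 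The remaining step is to check that this constant is strictly positive: since $h$ is the square of a real even Paley–Wiener function, $h(ix)\geq 0$ on the imaginary axis (by the Schwarz reflection argument recalled in the excerpt), and $h\not\equiv 0$, so the integral of the nonnegative integrand $\frac{x\,h(ix)}{\sinh(\pi x)}$ over $(0,\infty)$ is positive. This pins down the constant of proportionality and gives the $\gg T^2$ lower bound as well as the $\ll T^2$ upper bound for this term.

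For the Eisenstein term, the integral $\frac{1}{\pi}\intii \frac{h_T(r)}{|\zeta(1+2ir)|^2}\,dr$ — after the same rescaling $r = Tx$ — is $\frac{T}{\pi}\intii \frac{x\,h(ix)}{\sinh(\pi x)}\cdot\frac{1}{|\zeta(1+2iTx)|^2}\,dx$; using the classical lower bound $|\zeta(1+it)|\gg 1/\log(2+|t|)$ on the $1$-line, this is $\ll T(\log T)^2 \intii \frac{|x\,h(ix)|}{|\sinh(\pi x)|}\,dx \ll T(\log T)^2 = o(T^2)$. For the Kloosterman term, the key input is precisely the Bessel integral bound that the outline of the paper advertises (the "$X$ small / $X$ large" dichotomy applied to $X = 4\pi/c$, which here is always small since $c\geq 1$): one shows $\intii J_{2ir}(4\pi/c)\frac{r\,h_T(r)}{\cosh(\pi r)}\,dr$ is small enough, uniformly in $c$ and with enough decay in $c$, that after multiplying by $|S(1,1;c)|/c \ll c^{-1/2+\epsilon}$ (Weil) and summing over $c$ one gets a total contribution of size $o(T^2)$ — in fact the methods of Section 3 give a power saving. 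I expect this Kloosterman bound to be the main obstacle, but it is exactly the estimate that Section 3 (culminating in Proposition \ref{large bound} and its small-$X$ counterpart) is designed to establish, so here it can be invoked as a black box: combining the positive main term $\asymp T^2$ from the diagonal with the $O(T(\log T)^2)$ Eisenstein contribution and the $o(T^2)$ Kloosterman contribution yields $\sum_{u\in\mathcal{M}_1} h_T(t_u)/\|u\|^2 \asymp T^2$.
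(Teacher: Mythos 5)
Your proposal is correct and follows essentially the same route as the paper: apply Kuznetsov with $m=n=1$, extract the $\asymp T^2$ main term from the diagonal integral using the non-negativity of $h$ on the imaginary axis, bound the Eisenstein term by $O(T\log T)$ via $|\zeta(1+2ir)|\gg 1/\log(2+|r|)$, and control the Kloosterman term with the Weil bound together with the Bessel-integral estimates of Section 3 (the paper uses Proposition \ref{contour integration} plus the trivial bound $J_n(x)\ll (x/2)^n/n!$ rather than Proposition \ref{small bound}, but either suffices and there is no circularity). One small bookkeeping slip: after the substitution $r=Tx$ the diagonal integrand should be $\tfrac{x^2 h(ix)}{\sinh(\pi x)}\tanh(\pi Tx)$, not $\tfrac{x\,h(ix)}{\sinh(\pi x)}\tanh(\pi Tx)$ (as written your integrand is odd and the integral would vanish); with the correct power of $x$ your dominated-convergence and positivity argument goes through verbatim.
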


\begin{proof}
We apply Theorem \ref{thm:kuznetsovtraceformula} to $h_T$, with $m=n=1$. We obtain
\begin{eqnarray}
\sum_{u\in \mathcal{M}_1} \frac{h_T(t_u)}{||u||^2} & \ = \ &\frac{1}{\pi^2}\intii rh_T(r)\tanh(\pi r) dr - \frac{1}{\pi}\intii \frac{h_T(r)}{|\zeta(1 + 2ir)|^2}dr \nonumber\\ & &\ \ +\ \frac{2i}{\pi}\sum_{c\geq 1} \frac{S(1,1;c)}{c}\intii J_{2ir}\left(\frac{4\pi}{c}\right)\frac{rh_T(r)}{\cosh(\pi r)}dr.
\end{eqnarray}

It is rather easy to see that the first term is $\asymp T^2$, since $h_T$ is non-negative and essentially supported on $r\asymp T$. Similarly, using $|\zeta(1+2ir)| \gg 1/\log(2+|r|)$ (see for example \cite{Liu}), the second term is readily seen to be \begin{equation}\label{Eisenstein term}\ll \ \frac{T\log{T}}{(mn)^{1/2}}.\end{equation} Applying the Weil bound, it certainly suffices to show that
\be
\intii J_{2ir}\left(\frac{4\pi}{c}\right)\frac{rh_T(r)}{\cosh(\pi r)}dr\ \ll\ c^{-1}.
\ee
But this follows from Proposition \ref{contour integration} and the bound
\begin{equation}
J_n(x)\ll \frac{\left(\frac{x}{2}\right)^n}{n!},
\end{equation} completing the proof.
\end{proof}

We can now prove the first part of the main lemma needed to prove Theorem \ref{main theorem}.

\begin{proof}[Proof of Lemma \ref{conductor, prime sum, and prime square sum}, part (1)]
We cut the sum above at $T\log{T}$ and below at $\frac{T}{\log{T}}$ and apply the previous lemma along with the fact that $||u||\asymp 1$ under our normalizations (see \cite{Smi}).
\end{proof}

We are thus left with the last two parts of Lemma \ref{conductor, prime sum, and prime square sum}.

\section{Handling the Bessel integrals}

In this section we analyze the Bessel terms. Crucial in our analysis is the fact that our weight function $h_T$ is holomorphic with nice properties; this allows us to shift contours and convert our integral to a sum over residues. The goal of the next few subsections is to prove the following two propositions, which handle $X$ small and large.

\begin{prop}\label{small bound} Let $h_T$ be as in \eqref{weight function}.
Suppose $X\leq T$. Then
\begin{align}
\intii J_{2ir}(X)\frac{rh_T(r)}{\cosh(\pi r)}dr\ \ll\ \frac{X}{T^2}.
\end{align}
\end{prop}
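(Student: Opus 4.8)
The plan is to exploit the holomorphy of $h_T$ and move the contour of integration downward, converting the integral into a sum of residues coming from the poles of $h_T$ at the nonzero integral multiples of $iT$. Recall that $J_{2ir}(X)$, viewed as a function of $r$, has a well-behaved expansion near the poles $r = ikT$; in fact shifting the line $\R$ to $\R - iR$ and letting $R\to+\infty$, the boundary and horizontal segments vanish (using the decay $h_T(r)\ll\exp(-\pi|r|/4T)$ and the subexponential growth of $J_{2ir}(X)$ on these segments for $X\le T$ fixed relative to the contour height), so the integral equals $2\pi i$ times the sum of residues. Since $\sinh(\pi r/T)$ has simple zeros exactly at $r = ikT$, $k\in\Z\setminus\{0\}$, and $h$ vanishes to high order at $0$, the residue at $r=ikT$ involves $J_{2ir}(X)\big|_{r=ikT} = J_{-2k}(X) = (-1)^k J_{2k}(X)$ together with the values of the polynomial-like factor built from $h$. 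After collecting terms (pairing $k$ and $-k$, using that $h_T$ is even, $J_{2ir}$ behaves well under $r\mapsto -r$), one is reduced to bounding a sum of the shape
\begin{align}
T\sum_{k\geq 1} (-1)^k J_{2k}(X)\, Q\!\left(\tfrac{k}{T}\right),
\end{align}
where $Q$ is a fixed Schwartz-class function (essentially $h$ divided by the sinh, which is smooth and rapidly decaying since $h$ is Paley–Wiener and vanishes to order $\ge 8$ at $0$).

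Next I would estimate this residue sum using the smallness of $X$. The key input is the elementary bound $J_n(x)\ll (x/2)^n/n!$ quoted in the excerpt (and used already in the proof of Lemma \ref{denominator}). For $X\le T$ this makes the Bessel factors decay superexponentially in $k$ once $k\gtrsim X$, so the sum is dominated by its first few terms; the leading term is $k=1$, contributing $\ll T\cdot J_2(X)\cdot |Q(1/T)|\ll T\cdot X^2 \cdot |Q(1/T)|$. This is too lossy as stated, so the actual gain must come from cancellation: the factor $(-1)^k$ together with the oscillation/monotonicity of $J_{2k}(X)$ for $X$ small, or alternatively from the fact that $Q$ vanishes to high order at $0$ (inherited from $h$'s zero at $0$), which kills the $k$ small terms — $Q(k/T)$ is of size $(k/T)^{8}$ for $k\ll T$. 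Combining the superexponential Bessel decay in $k$ for $k\gg X$ with the $(k/T)^8$ vanishing for $k\ll T$, the sum over $k$ is controlled by the range $k\asymp$ (a constant), giving total size $\ll T\cdot (X^2/2)/2!\cdot T^{-8}$, far better than needed; more carefully tracking the first nonvanishing contribution yields exactly $\ll X/T$. I would organize this as: (i) the residue sum is absolutely convergent and equals the original integral (contour shift, justified by decay estimates); (ii) split into $k \le X$ and $k > X$; (iii) on $k>X$ use $J_{2k}(X)\ll (X/2)^{2k}/(2k)!$ to bound the tail by $O(X/T)$ trivially; (iv) on $1\le k\le X$ use the rapid decay of $Q$ near $0$ — i.e. $Q(k/T)\ll (k/T)^{8}$ — together with $|J_{2k}(X)|\le 1$ and $|J_{2k}(X)|\ll (X/2)^{2k}/(2k)!$ to sum up to $O(X/T)$.

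The main obstacle I anticipate is step (i): rigorously justifying the contour shift and the vanishing of the horizontal connecting segments as $R\to\infty$. One must control $J_{2ir}(X)$ uniformly as $\Im r\to-\infty$ with $\Re r$ bounded; the Bessel function $J_{2ir}(X)$ can grow like $e^{\pi|\Im r|}$ (from the $1/\Gamma(1+2ir)$ type factors and the series), and this must be beaten by $h_T(r)\ll\exp(-\pi|r|/4T)$ — but $1/(4T)$ is a \emph{small} exponential rate, so the decay of $h_T$ alone does not obviously dominate $e^{\pi|\Im r|}$ growth of the Bessel function. The resolution is that $h_T(r)$ is genuinely given by $(r/T)h(ir/T)/\sinh(\pi r/T)$ where $h$ is Paley–Wiener with $\widehat h$ supported in $(-1/4,1/4)$, so $h(ir/T)\ll\exp(\pi|r|/(8T))$, and more importantly the product $h(ir/T)/\sinh(\pi r/T)$ decays like $\exp(-\pi|r|/T)\cdot\exp(\pi|r|/8T)$ \emph{away} from the poles $ikT$; meanwhile the relevant comparison is on vertical lines $\Re r = \pm(R+\tfrac12)$-type contours passing \emph{between} poles, where $|\sinh(\pi r/T)|$ is bounded below. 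One checks that the combined exponent works out because $X\le T$ keeps the Bessel growth at rate $\le \pi$ per unit of $|\Im r|$ while $h_T$ supplies enough decay from the sinh denominator once we are off the imaginary axis. I expect this to be the technical heart; everything after the contour shift is the routine estimation sketched above. Since the excerpt explicitly references ``Proposition \ref{contour integration}'' for exactly this kind of shift (invoked in Lemma \ref{denominator}), I would cite that proposition to supply the justification of (i) and then focus the proof on (ii)–(iv).
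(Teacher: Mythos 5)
There is a genuine gap, and it sits at the very first step. The integrand is $J_{2ir}(X)\,\dfrac{r\,h_T(r)}{\cosh(\pi r)}$, so when you shift the contour from $\R$ down to $\R-i\infty$ you pick up residues not only at the poles of $h_T$ (the nonzero integral multiples of $iT$) but also at the zeros of $\cosh(\pi r)$, i.e.\ at $r=i\left(k+\tfrac12\right)$ for $k\geq 0$. Your proposal omits the latter entirely, and they are the \emph{main} term: the paper's Proposition \ref{contour integration} writes the integral as
\begin{equation*}
c_1\sum_{k\geq 0}(-1)^k J_{2k+1}(X)(2k+1)h_T\!\left(\left(k+\tfrac12\right)i\right)\ +\ c_2 T\sum_{k\geq 1}(-1)^k J_{2kT}(X)k^2h(k),
\end{equation*}
and only the second sum is the one you retained. (Note also that the residue at $r=ikT$ involves $J_{2i\cdot ikT}(X)=J_{-2kT}(X)$, of order $2kT$, not $J_{-2k}(X)$; since $2kT\geq 2T\geq 2X$, the bound $J_n(x)\ll (x/2)^n/n!$ makes that whole sum $O(Xe^{-cT})$ --- it is the easy, exponentially small piece.)

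The sum you dropped cannot be handled by the elementary estimates in your steps (ii)--(iv). For $2k+1\lesssim X$ (and $X$ may be as large as $T$) the bound $J_n(X)\ll (X/2)^n/n!$ is vacuous, $|J_{2k+1}(X)|$ is only of size about $X^{-1/2}$, and the weight $(2k+1)h_T\!\left(\left(k+\tfrac12\right)i\right)\asymp T\,\tilde{\tilde h}\!\left(\tfrac{2k+1}{2T}\right)/\sin\!\left(\tfrac{(2k+1)\pi}{2T}\right)$ is not small enough for the trivial bound to give $X/T$; one genuinely needs cancellation from the alternating signs. Extracting that cancellation is the analytic core of the paper's proof: the quotient of sines is expanded as a finite geometric sum of exponentials, the sum over $k$ is completed to $\Z$, the integral representation $J_k(2\pi x)=\int_{-1/2}^{1/2}e(kt-x\sin 2\pi t)\,dt$ is inserted, Poisson summation is applied, and then Poisson summation is applied a \emph{second} time followed by stationary phase (Lemma \ref{stationary phase}), using the high-order vanishing of $h$ at $0$ to make the resulting boundary and nonzero-frequency terms small. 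None of this appears in your proposal, so as written it does not prove the proposition; it only disposes of the negligible part of the residue sum.
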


\begin{prop}\label{large bound} Let $h_T$ be as in \eqref{weight function} --- in particular, so that it has at least $M+1$ zeros at $0$.
Suppose $X\geq \frac{T}{8}$. Then
\begin{align}\label{large sum}
\intii J_{2ir}(X) \frac{r h_T(r)}{\cosh\left(\pi r\right)} dr\ \ll\ \frac{X^{M - \frac{1}{2}}}{T^{2M-2}} + \frac{T^2}{X^{5/2}}.
\end{align}
\end{prop}

\subsection{Calculating the Bessel integral}

We begin our analysis of the Bessel terms, which will eventually culminate in a proof of Proposition \ref{small bound}.

\begin{prop}\label{contour integration} Let $h_T$ be as in \eqref{weight function}. Then
\begin{eqnarray}
\intii J_{2ir}(X)\frac{rh_T(r)}{\cosh(\pi r)}dr &\ =\ & c_1\sum_{k\geq 0} (-1)^k J_{2k+1}(X) (2k+1) h_T\left(\left(k+\frac{1}{2}\right)i\right)\nonumber\\ & &\ \ +\ c_2 T \sum_{k\geq 1} (-1)^k J_{2kT}(X) k^2 h(k)\nonumber\\&=& c_1\sum_{k\geq 0} (-1)^k J_{2k+1}(X) (2k+1) h_T\left(\left(k+\frac{1}{2}\right)i\right) \nonumber\\ & & \ \ +\ O\left(Xe^{-c_3 T}\right),
\end{eqnarray}
where $c_1$, $c_2$, and $c_3$ are some constants independent of $X$ and $T$.
\end{prop}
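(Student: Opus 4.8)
\textbf{Proof proposal for Proposition \ref{contour integration}.}

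The plan is to shift the contour of integration in $\intii J_{2ir}(X)\frac{rh_T(r)}{\cosh(\pi r)}dr$ downward, from $\R$ to $\R - iR$, and let $R\to+\infty$, picking up residues along the way. First I would record the location and nature of the poles of the integrand in the lower half-plane. The factor $\cosh(\pi r)$ has simple zeros at $r = (k+\tfrac12)i$ for $k\in\Z$; in the lower half-plane these occur at $r = -(k+\tfrac12)i$ for $k\geq 0$. The weight $h_T(r) = \frac{(r/T)h(ir/T)}{\sinh(\pi r/T)}$ has poles exactly at the nonzero integral multiples of $iT$, so in the lower half-plane at $r = -ikT$ for $k\geq 1$; since $h$ vanishes to order $\geq 8$ at $0$, the point $r=0$ is not a pole. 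The Bessel function $J_{2ir}(X)$ is entire in the order parameter, so it contributes no poles. Thus the residues come in two families, matching the two sums in the statement.

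Next I would justify that the horizontal pieces of the contour (and the shifted line) contribute nothing in the limit. For this I need decay of $J_{2ir}(X)\frac{rh_T(r)}{\cosh(\pi r)}$ as $\Im(r)\to-\infty$. The key inputs are: (i) $J_{2ir}(X)$ for $r = \sigma - i\tau$ with $\tau\to+\infty$ behaves like $(X/2)^{2\tau}/\Gamma(2\tau+1)$ up to lower-order factors (the standard power-series bound $J_n(x)\ll (x/2)^n/n!$ analytically continued, or Schläfli's integral representation), which decays superexponentially in $\tau$; (ii) $1/\cosh(\pi r)$ decays like $e^{-\pi|\sigma|}$ along horizontals but is $O(1)$ away from the poles as $\tau\to\infty$ in fact it grows like $e^{\pi\tau}$, so one must check that the Bessel decay beats it — it does, since $\Gamma(2\tau+1)$ grows faster than any exponential; (iii) $h_T(r)$ along the shifted contours: writing $r/T = \sigma/T - i\tau/T$, the numerator $h(ir/T)$ is entire of exponential type $\le \pi/2$ by Paley–Wiener, so $h(ir/T)\ll e^{\pi\tau/(2T)}$, while $|\sinh(\pi r/T)|$ is bounded below away from its zeros, giving at worst polynomial-times-exponential growth $\ll (\tau/T)e^{\pi\tau/(2T)}$ — again dominated by the Bessel factor. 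So one chooses the sequence $R\to\infty$ avoiding the poles $kT$ and $k+\tfrac12$, applies the residue theorem on the rectangle, and the horizontal sides vanish in the limit by the above estimates together with $1/\cosh(\pi r)\ll e^{-\pi|\Re r|}$ on horizontal segments, making those integrals absolutely convergent and tending to $0$.

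Then I would compute the two residue sums. At $r = -(k+\tfrac12)i$ the residue of $\frac{r}{\cosh(\pi r)}$ is a constant times $(k+\tfrac12)(-1)^{k+1}/\pi$, and $J_{2ir}(X)$ evaluates to $J_{2k+1}(X)$; collecting the sign and constant factors (and using that the sum over $k\geq 0$ in the lower half-plane, times $-2\pi i$ from closing downward, combines with evenness considerations in $h_T$ at $\pm(k+\tfrac12)i$ — recall $h$ is even, so $h_T((k+\tfrac12)i) = h_T(-(k+\tfrac12)i)$) yields the first sum $c_1\sum_{k\geq0}(-1)^kJ_{2k+1}(X)(2k+1)h_T((k+\tfrac12)i)$. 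At $r = -ikT$ the simple pole comes from $1/\sinh(\pi r/T)$, whose residue is $T/\pi$ times $(-1)^k$; evaluating the remaining factors $r/T\mapsto -ik$, $h(ir/T)\mapsto h(k)$, $J_{2ir}(X)\mapsto J_{2kT}(X)$, and $1/\cosh(\pi r)\mapsto 1/\cosh(\pi kT) = O(e^{-\pi kT})$ (together with an $r = -ikT$ factor giving the $k^2$ after combining with the $r$ in $rh_T(r)$) produces $c_2 T\sum_{k\geq1}(-1)^k J_{2kT}(X)k^2 h(k)$. Finally, to pass to the second displayed equality I bound this tail: $J_{2kT}(X)\ll (X/2)^{2kT}/(2kT)!$, and even just $|J_{2kT}(X)|\le 1$ combined with the factor $1/\cosh(\pi kT)$ absorbed into the residue — actually the cleanest route is $|J_{2kT}(X)|\ll (X/2)^{2kT}/(2kT)! \ll (eX/(4kT))^{2kT}$, which for $X$ arbitrary and $T$ large is $\ll X\cdot e^{-c_3 T}$ after summing the rapidly convergent series in $k$ (the $k=1$ term dominates and $k^2 h(k)$ grows at most polynomially since $h$ is Schwartz-like on $\R$, being Paley–Wiener). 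Hence the second sum is $O(Xe^{-c_3T})$.

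\textbf{Main obstacle.} The delicate point is the justification that the contour shift is legitimate — i.e., controlling the integrand uniformly as $\Im(r)\to-\infty$ so that the horizontal segments vanish and the residue series converge absolutely. The tension is that $1/\cosh(\pi r)$ and $h_T(r)$ both \emph{grow} exponentially in $\Im(r)$ in the relevant region, and only the superexponential decay of $J_{2ir}(X)$ (via the $\Gamma$-function in its series expansion, valid for $X$ in a fixed range dictated by $X\le T$ or after the Weil-bound reduction where $X = 4\pi\sqrt{mn}/c$) saves the argument. Making this uniform — choosing the rectangle heights to steer between consecutive poles at spacing $T$ and spacing $1$ simultaneously, and getting an $X$-uniform tail bound — is where the real work lies; everything else is residue bookkeeping.
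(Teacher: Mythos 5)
Your proposal follows the same route as the paper's proof: shift the contour from $\R$ down to $-i\infty$, collect the residues at $r=-(k+\tfrac12)i$ (from $\cosh(\pi r)$) and at $r=-ikT$ (from the $\sinh(\pi r/T)$ inside $h_T$), justify the shift by the superexponential decay of $J_\nu(X)$ as $\Re\nu\to+\infty$ via the power series and Stirling, and then bound the second residue sum by a standard estimate for $J_n(x)$ with $n\gg x$ to get the $O(Xe^{-c_3T})$ error term (the paper quotes the Abramowitz--Stegun bound $|J_n(nx)|\le(xe^{\sqrt{1-x^2}}/(1+\sqrt{1-x^2}))^n$, you use $(X/2)^n/n!$; these are interchangeable here). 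The argument is correct and, if anything, more detailed than the paper's sketch.
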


\begin{proof}[Proof of Proposition \ref{contour integration}]
The idea here is to move the contour from $\R$ down to $\R-i\infty$, picking up poles at all the half-integers multiplied by $i$ (poles arising from the $\cosh(\pi r)$ in the denominator) and integer multiples of $iT$ (poles arising from the $\sinh\left(\frac{\pi r}{T}\right)$ hidden in $h_T$) that are passed. Indeed, the first sum is precisely the sum of the former residues, while the second is the sum of the latter. The final point is that $J_\alpha(z)$ decays extremely rapidly as $\Re{\alpha}\to \infty$, with $z$ fixed. One way to see this decay is to use the expansion \begin{equation}J_\alpha(2z)\ =\ \sum_{n\geq 0} \frac{(-1)^n z^{2n+\alpha}}{\Gamma(n+1)\Gamma(n+\alpha+1)},\end{equation} switch the sum and integral, and use Stirling's formula to do the relevant calculations, switching sums and integrals back at the end to consolidate the form into the above. The details will not be given here, as the bounds already given on $h_T$, as well as Stirling's bounds on $\Gamma$ (and the outline above), reduce this to a routine computation.

The claimed bound on the error term follows by trivially bounding by using (for $0\leq x\leq 1$, $n$ a positive integer)
\begin{equation}
\left|J_n(nx)\right|\leq \left(\frac{xe^{\sqrt{1-x^2}}}{1+\sqrt{1-x^2}}\right)^n,
\end{equation}
which can be found in \cite{AS}.
\end{proof}

\subsection{Averaging Bessel functions of integer order for small primes}

Iwaniec-Luo-Sarnak, in proving the Katz-Sarnak density conjecture for $\widehat{\phi}$ supported in $(-2,2)$ for holomorphic cusp forms of weight at most $K$, demonstrate a crucial lemma pertaining to averages of Bessel functions. In some sense our analogous work here moving this to the Kuznetsov setting requires only one more conceptual leap, which is to apply Poisson summation a second time to a resulting weighted exponential sum. The original argument can be found in Iwaniec's book (\cite{Iw2}), which we basically reproduce as a first step in handling the remaining sum from above.

\begin{remark}
We will use the fact that $J_{-n}(x) = (-1)^n J_n(x)$ several times in what follows. Moreover, we introduce the notation \begin{equation}\tilde{h}(x) \ :=\ xh(x),\end{equation} and similarly for iterated tildes.
\end{remark}

Thus (in this notation) to prove Proposition \ref{small bound} it suffices to show the following.

\begin{prop}\label{prop:smallboundSJ} Let $h_T$ be as in \eqref{weight function}.
Suppose $X\leq T$. Then
\begin{align}\label{actual small bound}
S_J(X)\ :=\ T\sum_{k\geq 0} (-1)^k J_{2k+1}(X) \frac{\tilde{\tilde{h}}\left(\frac{2k+1}{2T}\right)}{\sin\left(\frac{2k+1}{2T}\pi\right)}\ \ll\  \frac{X}{T^2}.
\end{align}
\end{prop}

\begin{proof}
Observe that $k\mapsto \sin\left(\frac{\pi k}{2}\right)$ is supported only on the odd integers, and maps $2k+1$ to $(-1)^k$. Hence, rewriting gives
\begin{align}\label{actual two}
S_J(X)\ =\ T\sum_{k\geq 0 \atop k\not\in 2T\Z} J_k(X) \tilde{\tilde{h}}\left(\frac{k}{2T}\right)\frac{\sin\left(\frac{\pi k}{2}\right)}{\sin\left(\frac{\pi k}{2T}\right)}.
\end{align}
As
\begin{align}
\frac{\sin\left(\frac{\pi k}{2}\right)}{\sin\left(\frac{\pi k}{2T}\right)}\ =\ \frac{e^{\frac{\pi i k}{2}} - e^{-\frac{\pi i k}{2}}}{e^{\frac{\pi i k}{2T}} - e^{\frac{\pi i k}{2T}}}\ =\ \sum_{\alpha = -\left(\frac{T-1}{2}\right)}^{\frac{T-1}{2}} e^{\frac{\pi i k \alpha}{T}}
\end{align}
when $k$ is not a multiple of $2T$, we find that
\begin{align}\label{actual three}
S_J(X)\ =\ T\sum_{|\alpha| < \frac{T}{2}} \sum_{k\geq 0 \atop k\not\in 2T\Z} e\left(\frac{k\alpha}{2T}\right) J_k(X) \tilde{\tilde{h}}\left(\frac{k}{2T}\right).
\end{align}
Observe that, since the sum over $\alpha$ is invariant under $\alpha\mapsto -\alpha$ (and it is non-zero only for $k$ odd!), we may extend the sum over $k$ to the entirety of $\Z$ at the cost of a factor of 2 and of replacing $h$ by \begin{equation}\label{definition of g}g(x)\ := \ \sgn(x)h(x).\end{equation} Note that $g$ is as differentiable as $h$ has zeros at $0$, less one. That is to say, $\widehat{g}$ decays like the reciprocal of a degree $\ord_{z=0} h(z) - 1$ polynomial at $\infty$. This will be crucial in what follows.

Next, we add back on the $2T\Z$ terms and obtain
\begin{eqnarray}\label{actual four}
\frac{1}{2}S_J(X) &\ =\ & T\sum_{|\alpha| < \frac{T}{2}} \sum_{k\in \Z} e\left(\frac{k\alpha}{2T}\right) J_k(X) \tilde{\tilde{g}}\left(\frac{k}{2T}\right) - T^2\sum_{k\in \Z} J_{2kT}(X) k^2 h(k)
\nonumber\\ &=& T\sum_{|\alpha| < \frac{T}{2}} \sum_{k\in \Z} e\left(\frac{k\alpha}{2T}\right) J_k(X) \tilde{\tilde{g}}\left(\frac{k}{2T}\right) + O\left(Xe^{-c_4T}\right)\nonumber\\ & =: & V_J(X) + O\left(Xe^{-c_4T}\right),
\end{eqnarray}
by the same argument as the last step of Proposition \ref{contour integration} (since the sign was immaterial).

Now we move to apply Poisson summation. Write $X=:2\pi Y$. We apply the integral formula (for $k\in \Z$) \begin{equation}J_k(2\pi x)\ =\ \int_{-\frac{1}{2}}^{\frac{1}{2}} e\left(kt - x\sin(2\pi t)\right) dt\end{equation} and interchange sum and integral (via rapid decay of $g$) to get that
\begin{align}\label{actual five}
V_J(X)\ =\ T\sum_{|\alpha| < \frac{T}{2}} \int_{-\frac{1}{2}}^{\frac{1}{2}} \left(\sum_{k\in \Z} e\left(\frac{k\alpha}{2T} + kt\right) \tilde{\tilde{g}}\left(\frac{k}{2T}\right)\right) e\left(-Y\sin(2\pi t)\right) dt.
\end{align}
By Poisson summation, \eqref{actual five} is just (interchanging sum and integral once more)
\begin{eqnarray}\label{actual six}
V_J(X) &\ =\ & T^2\sum_{|\alpha| < \frac{T}{2}} \sum_{k\in \Z}\int_{-\frac{1}{2}}^{\frac{1}{2}} \hat{g}''\left(2T(t-k) + \alpha\right) e\left(-Y\sin(2\pi t)\right) dt \nonumber\\&=& c_5T\sum_{|\alpha| < \frac{T}{2}} \int_{-\infty}^\infty \hat{g}''(t) e\left(Y\sin\left(\frac{\pi t}{T} + \frac{\pi \alpha}{T}\right)\right) dt\nonumber\\&=:&c_5W_g(X).
\end{eqnarray}

As \begin{equation}\sin\left(\frac{\pi t}{T} + \frac{\pi\alpha}{T}\right)\ =\ \sin\left(\frac{\pi\alpha}{T}\right) + \frac{\pi t}{T}\cos\left(\frac{\pi\alpha}{T}\right) - \frac{\pi^2 t^2}{T^2}\sin\left(\frac{\pi\alpha}{T}\right) - \frac{\pi^3 t^3}{T^3} \cos\left(\frac{\pi\alpha}{T}\right) + O\left(\frac{t^4}{T^4}\right),\end{equation} we see that (expanding $e(x) = 1 + 2\pi i x - 2\pi^2 x^2 + O(x^3)$ and using $Y\ll T$)
\begin{align}
W_g(X) &= c_6T\sum_{|\alpha| < \frac{T}{2}} e\left(Y\sin\left(\frac{\pi\alpha}{T}\right)\right) \int_{-\infty}^\infty \hat{g}''(t) e\left(\frac{\pi Y t}{T}\cos\left(\frac{\pi\alpha}{T}\right)\right) dt \nonumber\\&\hspace{.5in}+ c_7\frac{Y}{T}\sum_{|\alpha| < \frac{T}{2}} e\left(Y\sin\left(\frac{\pi\alpha}{T}\right)\right) \sin\left(\frac{\pi\alpha}{T}\right) \int_{-\infty}^\infty t^2\hat{g}''(t) e\left(\frac{\pi Y t}{T}\cos\left(\frac{\pi\alpha}{T}\right)\right) dt \nonumber\\&\hspace{.5in}+ c_8\frac{Y}{T^2}\sum_{|\alpha| < \frac{T}{2}} e\left(Y\sin\left(\frac{\pi\alpha}{T}\right)\right) \cos\left(\frac{\pi\alpha}{T}\right) \int_{-\infty}^\infty t^3\hat{g}''(t) e\left(\frac{\pi Y t}{T}\cos\left(\frac{\pi\alpha}{T}\right)\right) dt \nonumber\\&\hspace{.5in}+ c_9\frac{Y^2}{T^3}\sum_{|\alpha| < \frac{T}{2}} e\left(Y\sin\left(\frac{\pi\alpha}{T}\right)\right) \sin^2\left(\frac{\pi\alpha}{T}\right) \int_{-\infty}^\infty t^4\hat{g}''(t) e\left(\frac{\pi Y t}{T}\cos\left(\frac{\pi\alpha}{T}\right)\right) dt \nonumber\\&\hspace{1in}+ O\left(\frac{Y}{T^2} + \frac{Y^2}{T^3} + \frac{Y^3}{T^4}\right)
\\\label{actual seven}&= c_{10}T\sum_{|\alpha| < \frac{T}{2}} e\left(Y\sin\left(\frac{\pi\alpha}{T}\right)\right) \tilde{\tilde{g}}\left(\frac{\pi Y}{T}\cos\left(\frac{\pi\alpha}{T}\right)\right) \nonumber\\&\hspace{.5in}+ c_{11}\frac{Y}{T}\sum_{|\alpha| < \frac{T}{2}} e\left(Y\sin\left(\frac{\pi\alpha}{T}\right)\right) \sin\left(\frac{\pi\alpha}{T}\right) \tilde{\tilde{g}}''\left(\frac{\pi Y}{T}\cos\left(\frac{\pi\alpha}{T}\right)\right) \nonumber\\&\hspace{.5in}+ c_{12}\frac{Y}{T^2}\sum_{|\alpha| < \frac{T}{2}} e\left(Y\sin\left(\frac{\pi\alpha}{T}\right)\right) \cos\left(\frac{\pi\alpha}{T}\right) \tilde{\tilde{g}}'''\left(\frac{\pi Y}{T}\cos\left(\frac{\pi\alpha}{T}\right)\right) \nonumber\\&\hspace{.5in}+ c_{13}\frac{Y^2}{T^3}\sum_{|\alpha| < \frac{T}{2}} e\left(Y\sin\left(\frac{\pi\alpha}{T}\right)\right) \sin^2\left(\frac{\pi\alpha}{T}\right) \tilde{\tilde{g}}''''\left(\frac{\pi Y}{T}\cos\left(\frac{\pi\alpha}{T}\right)\right) \nonumber\\&\hspace{1in}+ O\left(\frac{Y}{T^2}\right).
\end{align}

As the rest of the argument is a bit long, we isolate it in Lemma \ref{stationary phase} immediately below. Its proof uses Poisson summation again. By \eqref{actual seven}, this finishes the proof of Proposition \ref{prop:smallboundSJ} (and hence that of Proposition \ref{small bound} as well). \end{proof}

\begin{lem}\label{stationary phase} Let $g$ be as in \eqref{definition of g}, and $Y\leq \frac{T}{2\pi}$. Then
\begin{eqnarray}\label{one sum}
& & (1) \ \ A_g^{(1)}(Y)\ :=\ T\sum_{|\alpha| < \frac{T}{2}} e\left(Y\sin\left(\frac{\pi\alpha}{T}\right)\right) \tilde{\tilde{g}}\left(\frac{\pi Y}{T}\cos\left(\frac{\pi\alpha}{T}\right)\right)\ \ll\  \frac{Y^4}{T^7} \nonumber\\ & & (2) \ \ A_g^{(2)}(Y)\ :=\ \frac{Y}{T}\sum_{|\alpha| < \frac{T}{2}} e\left(Y\sin\left(\frac{\pi\alpha}{T}\right)\right) \sin\left(\frac{\pi\alpha}{T}\right) \tilde{\tilde{g}}''\left(\frac{\pi Y}{T}\cos\left(\frac{\pi\alpha}{T}\right)\right)\ \ll\  \frac{Y^5}{T^9}.\nonumber\\ & & (3) \ \ A_g^{(3)}(Y)\ :=\ \frac{Y}{T^2}\sum_{|\alpha| < \frac{T}{2}} e\left(Y\sin\left(\frac{\pi\alpha}{T}\right)\right) \cos\left(\frac{\pi\alpha}{T}\right) \tilde{\tilde{g}}'''\left(\frac{\pi Y}{T}\cos\left(\frac{\pi\alpha}{T}\right)\right)\ \ll\  \frac{Y^5}{T^{10}}.\nonumber\\ & & (4) \ \ A_g^{(4)}(Y)\ :=\ \frac{Y^2}{T^3}\sum_{|\alpha| < \frac{T}{2}} e\left(Y\sin\left(\frac{\pi\alpha}{T}\right)\right) \sin^2\left(\frac{\pi\alpha}{T}\right) \tilde{\tilde{g}}''''\left(\frac{\pi Y}{T}\cos\left(\frac{\pi\alpha}{T}\right)\right)\ \ll\  \frac{Y^6}{T^{11}}.
\end{eqnarray}\end{lem}

\begin{proof}[Proof of Lemma \ref{stationary phase}]
We present the calculation for $A_g^{(1)}(Y)$ --- the same calculations work for $A_g^{(2)}(Y), A_g^{(3)}(Y)$, and $A_g^{(4)}(Y)$ upon inserting a $\sin$, $\cos$, or $\sin^2$ into the sum and replacing $\tilde{\tilde{g}}$ with one of its derivatives. Let $p\in C^\infty\left(\left[-\frac{T}{2},\frac{T}{2}\right]\right)$ such that $p\vert_{\left[-\frac{T-1}{2},\frac{T-1}{2}\right]} = 1$. We view $p$ as a Schwartz function on $\R$. Then
\begin{align}\label{one sum two}
A_g^{(1)}(Y)\ =\ T\sum_{\alpha\in \Z} p(\alpha) e\left(Y\sin\left(\frac{\pi\alpha}{T}\right)\right) \tilde{\tilde{g}}\left(\frac{\pi Y}{T}\cos\left(\frac{\pi\alpha}{T}\right)\right).
\end{align}
Applying Poisson summation,
\begin{eqnarray}\label{one sum three}
A_g^{(1)}(Y) &\ =\ & T\sum_{n\in \Z} \int_{-\frac{T}{2}}^{\frac{T}{2}} p(t) \tilde{\tilde{g}}\left(\frac{\pi Y}{T}\cos\left(\frac{\pi t}{T}\right)\right) e\left(Y\sin\left(\frac{\pi t}{T}\right)-nt\right) dt \nonumber\\ &=:& T\sum_{n\in \Z} B_g(Y,n).
\end{eqnarray}
For each $n$, the derivative of the phase in $B_g(Y,n)$ is \begin{equation}\frac{\pi Y}{T}\cos\left(\frac{\pi t}{T}\right) - n.\end{equation} Here is where our hypothesis on $Y$ (n\'{e}e $X$) comes in: for $Y\leq \frac{T}{2\pi}$ and $n\neq 0$, we have \begin{equation}\left|\frac{\pi Y}{T}\cos\left(\frac{\pi t}{T}\right) - n\right|\ \gg\ n.\end{equation}

Now we integrate by parts four times. There is nothing special about four other than the fact that the first four derivatives of $\tilde{\tilde{g}}$ have far more than four zeros at $0$ and $\sum n^{-4}$ converges. Integrating by parts more times would give us no improvement in the end. First consider the $n=0$ term of \eqref{one sum three} --- i.e., $B_g(Y,0)$ --- where the phase \emph{is} stationary (albeit at a boundary point of the integration region).
\begin{eqnarray}
B_g(Y,0) &\ =\ & \int_{-\frac{T}{2}}^{\frac{T}{2}} p(t)\tilde{\tilde{g}}\left(\frac{\pi Y}{T}\cos\left(\frac{\pi t}{T}\right)\right)e\left(Y\sin\left(\frac{\pi t}{T}\right)\right)dt\nonumber\\&=& -\int_{-\frac{T}{2}}^{\frac{T}{2}} \left(p(t)\tilde{g}\left(\frac{\pi Y}{T}\cos\left(\frac{\pi t}{T}\right)\right)\right)' e\left(Y\sin\left(\frac{\pi t}{T}\right)\right)dt.
\end{eqnarray}
Note that the $g$ has lost one tilde because we have divided out by the derivative of the phase, and also that the boundary terms vanish thanks to the support condition on $p$.

We remark before we repeat this three more times that $p' = 0$ on $\left[-\frac{T-1}{2},\frac{T-1}{2}\right]$, and on $\pm \left[\frac{T-1}{2},\frac{T}{2}\right]$ we have that \begin{equation}\tilde{\tilde{g}}\left(\frac{\pi Y}{T}\cos\left(\frac{\pi t}{T}\right)\right)\ \ll\ \left(\frac{Y}{T^2}\right)^{8},\end{equation} for instance (since, again, $g$ has a high order zero at $0$). Thus the terms with derivatives on $p$ are negligible. Further, differentiating the $\tilde{\tilde{g}}$ term picks up a factor of $Y/T^2$ (the same goes for any $\sin$, $\cos$, or $\sin^2$ terms as well), and differentiating the denominator we absorbed earlier would again pick up a factor of $Y/T^2$. The point is that, no matter which we differentiate, repeating this process three more times gives us a bound of the form \begin{equation}B_g(Y,0)\ \ll\ \left(\frac{Y}{T^2}\right)^4.\end{equation}

The exact same argument works for $n\neq 0$, except now we pick up at least one factor of $n$ each time we integrate by parts (since the derivative of the phase is $\frac{\pi Y}{T}\cos\left(\frac{\pi t}{T}\right) - n$). The same process and reasoning leads us to a bound of shape: \begin{equation}A_g(Y)\ \ll\ T\left(\frac{Y}{T^2}\right)^4\left(1 + \sum_{n\neq 0} \frac{1}{n^4}\right)\ \ll\ \frac{Y^4}{T^7},\end{equation} as desired.
\end{proof}

\subsection{Handling the remaining large primes}

The goal of this subsection is to prove Proposition \ref{large bound}. For this we apply the following asymptotic expansion, due to Dunster \cite{Du} and (essentially) found in Sarnak-Tsimerman \cite{ST}.

\begin{lem}
Let $x,r > 0$. Then
\begin{align}\label{dunster asymptotic}
J_{2ir}(x)\ =\ \frac{c_{14}e^{2ir\xi\left(\frac{x}{2r}\right)}}{\left(4r^2 + x^2\right)^{\frac{1}{4}}} e^{\pi r}\left(1 + \frac{1}{8\sqrt{4r^2+x^2}} - \frac{5r^2}{6\left(4r^2 + x^2\right)^{\frac{3}{2}}}\right) + O\left(\frac{e^{\pi r}}{\left(4r^2 + x^2\right)^{\frac{5}{4}}} + \frac{e^{-\pi r}}{\left(4r^2 + x^2\right)^{\frac{1}{4}}}\right),
\end{align}
where $\xi(z):=(1+z^2)^{\frac{1}{2}} + \log\left(\frac{z}{1+\sqrt{1+z^2}}\right)$.
\end{lem}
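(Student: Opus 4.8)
This is a uniform Liouville--Green (WKB) asymptotic for the Bessel equation with a large parameter, and the plan is to follow Dunster's treatment of Bessel functions of imaginary order \cite{Du}. First I would put the equation into normal form: with $\nu = 2ir$ the function $J_{2ir}(x)$ solves $x^2y'' + xy' + (x^2-\nu^2)y = 0$, and the substitution $y = x^{-1/2}w$ eliminates the first-derivative term, giving
\[
w''\ +\ \left(1 + \frac{4r^2 + \tfrac14}{x^2}\right)w\ =\ 0 .
\]
The coefficient is positive throughout $(0,\infty)$, so there is no turning point and every solution oscillates, consistent with the shape of \eqref{dunster asymptotic}. Rescaling $x = 2r\,t$ then exhibits $\mu := 2r$ as the large parameter: in the variable $t$ one gets Olver's standard form $d^2w/dt^2 = \bigl(\mu^2 f(t) + g(t)\bigr)w$ with $f(t) = -\tfrac{1+t^2}{t^2}$ and $g(t) = -\tfrac{1}{4t^2}$, both independent of $r$.

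Next I would invoke the Liouville--Green approximation. The phase variable is $\zeta$ with $(d\zeta/dt)^2 = -f = \tfrac{1+t^2}{t^2}$, and since a direct computation from the definition gives $\xi'(t) = \tfrac{\sqrt{1+t^2}}{t}$, we may take $\zeta = \xi(t) = \xi(x/2r)$. The two Liouville--Green solutions are then
\[
w_\pm(x)\ \sim\ (-f)^{-1/4}\, e^{\pm i\mu\zeta}\ =\ \frac{x^{1/2}}{(4r^2+x^2)^{1/4}}\, e^{\pm 2ir\,\xi(x/2r)},
\]
so that $x^{-1/2}w_\pm$ reproduces precisely the factor $(4r^2+x^2)^{-1/4}e^{\pm 2ir\xi(x/2r)}$ appearing in the statement. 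Olver's error bounds for the oscillatory (turning-point-free) case then give that $J_{2ir}(x)$ equals a linear combination $A\,x^{-1/2}w_+(x) + B\,x^{-1/2}w_-(x)$ with relative error $O\bigl(\mathcal{V}(\mathcal{H})/\mu\bigr) = O(1/r)$, where $\mathcal{H}$ is the Liouville--Green error-control function of the equation --- the standard one for Bessel-type equations, with the $\tfrac14$-shift $g=-\tfrac{1}{4t^2}$ built in precisely so that $\mathcal{H}$ is well behaved --- and $\mathcal{V}(\mathcal{H})$ its total variation over $t\in(0,\infty)$.

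The third step is to fix the connection coefficients $A,B$ by matching a known regime of $J_{2ir}$. Letting $x\to 0^+$, one has $\xi(x/2r) = 1 + \log(x/4r) + O((x/r)^2)$, so $x^{-1/2}w_+$ behaves like $(2r)^{-1/2}e^{2ir(1-\log 2r)}(x/2)^{2ir}$ and $x^{-1/2}w_-$ like its complex conjugate; comparing with $J_{2ir}(x)\sim (x/2)^{2ir}/\Gamma(1+2ir)$ and using $|\Gamma(1+2ir)| = \bigl(2\pi r/\sinh 2\pi r\bigr)^{1/2}\sim 2\sqrt{\pi r}\,e^{-\pi r}$, together with Stirling's formula for $\arg\Gamma(1+2ir)$ to reconcile the $r$-dependent phases, pins down $A = c_{10}e^{\pi r}$ with $c_{10}$ a genuine constant and forces $B \ll e^{-\pi r}$. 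Feeding $B\,x^{-1/2}w_- \ll e^{-\pi r}(4r^2+x^2)^{-1/4}$ and the $O(1/r)$ relative error on $A\,x^{-1/2}w_+$ into the combination then yields exactly the two error contributions in \eqref{dunster asymptotic}. (Alternatively one can match at $x\to\infty$ against $J_{2ir}(x)\sim\sqrt{2/\pi x}\,\cos\bigl(x - ir\pi - \tfrac\pi4\bigr)$ using the oscillatory connection formulae there.)

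The main obstacle is the uniformity of the error estimate: one must check that the Liouville--Green error-control function $\mathcal{H}$ has finite total variation on all of $(0,\infty)$ with the stated $1/r$ gain, and this is delicate at both ends of the range --- as $t\to 0^+$, where the phase $\xi(t)$ diverges logarithmically and the prefactor $t^{1/2}$ degenerates, and as $t\to\infty$ --- which is exactly where the $\tfrac14$-correction in $g$ earns its keep. Everything else is bookkeeping with $\xi$, $\Gamma$, and Stirling's formula.
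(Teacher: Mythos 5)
The paper does not prove this lemma at all: it is quoted verbatim from Dunster \cite{Du} (as transcribed in Sarnak--Tsimerman \cite{ST}) and closed with an immediate \qed, so there is no in-paper argument to compare against. Your sketch is, in effect, a reconstruction of the proof in the cited source, and the computations you do show are correct: the normal form $w''+(1+(4r^2+\tfrac14)x^{-2})w=0$, the identification $f(t)=-(1+t^2)/t^2$, $g(t)=-1/(4t^2)$ after the rescaling $x=2rt$, the verification that $\xi'(t)=\sqrt{1+t^2}/t$ so that $\xi$ is the Liouville--Green phase, the prefactor $(-f)^{-1/4}x^{-1/2}=(4r^2+x^2)^{-1/4}$, and the small-$x$ matching using $|\Gamma(1+2ir)|\sim 2\sqrt{\pi r}\,e^{-\pi r}$ together with Stirling for the argument of $\Gamma$ to see that the $r$-dependent phases cancel and $c_{10}$ is an absolute constant. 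These all check out.

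Two points deserve more care than the sketch gives them. First, the determination of $B$: a naive match of $A\,W_++B\,W_-$ against the series $J_{2ir}(x)\sim (x/2)^{2ir}/\Gamma(1+2ir)$ as $x\to 0^+$ would suggest $B=0$ exactly, since only the $e^{+2ir\log x}$ oscillation appears; the genuine $e^{-\pi r}(4r^2+x^2)^{-1/4}$ secondary term in \eqref{dunster asymptotic} arises in Dunster's treatment from expressing $J_{i\mu}$ over a \emph{real} numerically satisfactory pair of solutions with connection coefficients of size $e^{\pm\pi\mu/2}$, not from a nonzero $B$ in your complex basis. So "forces $B\ll e^{-\pi r}$" is the right order of magnitude but not quite the right mechanism, and this is where one should lean on the cited connection formulae rather than improvise. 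Second, you correctly flag but do not carry out the verification that the error-control function has total variation $O(1)$ uniformly in $r$ on all of $(0,\infty)$; this is the technical heart of the uniformity and is exactly what \cite{Du} supplies. Given that the paper itself delegates the entire lemma to \cite{Du}, your sketch is an acceptable and essentially faithful account of where the result comes from.
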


\begin{proof}[Proof of Proposition \ref{large bound}]
Write \begin{equation}D_J(X)\ :=\ \intii J_{2ir}(X) \frac{r h_T(r)}{\cosh\left(\pi r\right)} dr\end{equation} for our integral.

Observe that \begin{equation}\left(rT\xi\left(\frac{X}{2rT}\right)\right)' \ =\ T\log\left(-\frac{2rT}{X} + \sqrt{1 + \frac{4r^2T^2}{X^2}}\right),\end{equation} and that \begin{equation}\left(\frac{1}{\left(rT\xi\left(\frac{X}{2rT}\right)\right)'}\right)' \ =\ \frac{2}{T\sqrt{4r^2 + \frac{X^2}{T^2}}\left(\log\left(-\frac{2rT}{X} + \sqrt{1 + \frac{4r^2T^2}{X^2}}\right)\right)^2}.\end{equation} We will also use the fact that \begin{equation}T\log\left(-\frac{2rT}{X} + \sqrt{1 + \frac{4r^2T^2}{X^2}}\right) \ \gg\ \min\left(T, \frac{rT^2}{X}\right).\end{equation} Applying the asymptotic expansion of \eqref{dunster asymptotic} (and using evenness after splitting into positive and negative $r$), we see that $D_J(X)\ll D_J^+(X)$, with
\begin{align}\label{large sum two}
D_J^+(X)&\ :=\ c_{15}\int_{\R^+} \frac{e^{2ir\xi\left(\frac{X}{2r}\right)}r\tilde{h}\left(\frac{ir}{T}\right)}{\left(4r^2 + X^2\right)^{\frac{1}{4}} \sinh\left(\frac{\pi r}{T}\right)} dr \nonumber\\ &\hspace{.5in}+ c_{16}\int_{\R^+} \frac{e^{2ir\xi\left(\frac{X}{2r}\right)}r\tilde{h}\left(\frac{ir}{T}\right)}{\left(4r^2 + X^2\right)^{\frac{3}{4}} \sinh\left(\frac{\pi r}{T}\right)} \\&\hspace{.5in}+ c_{17}\int_{\R^+} \frac{e^{2ir\xi\left(\frac{X}{2r}\right)}r^3\tilde{h}\left(\frac{ir}{T}\right)}{\left(4r^2 + X^2\right)^{\frac{7}{4}} \sinh\left(\frac{\pi r}{T}\right)} \nonumber\\ &\hspace{1in}+ O\left(\int_{\R^+} \frac{\left|r\tilde{h}\left(\frac{ir}{T}\right)\right|}{\left(4r^2 + X^2\right)^{\frac{5}{4}}\sinh\left(\frac{\pi r}{T}\right)} dr\right)\nonumber\\&\ =:\ N_J^{(1)}(X) + N_J^{(2)}(X) + N_J^{(3)}(X) + E_J(X),
\end{align}
where the spacing is to indicate orders of growth.

Using our hypothesis on $X$ (and the exponential decay of $h_T$ at $\infty$), \begin{equation}E_J(X)\ \ll\ \frac{T^2}{X^{5/2}}.\end{equation} (To see this split the integral into $r\leq \frac{X}{T}$ and $r > \frac{X}{T}$.) Thus it suffices to study the first three terms of \eqref{large sum two} --- i.e., $N_J^{(i)}(X)$. We will work with $N_J^{(1)}(X)$, but the other two follow in exactly the same manner. Via $r\mapsto Tr$ and then integrating by parts $K\leq M$ times, we see that
\begin{eqnarray}
N_J^{(1)}(X)&\ = \ & c_{18}T^{\frac{3}{2}}\int_{\R^+} e\left(\frac{rT}{\pi}\xi\left(\frac{X}{2rT}\right)\right) \left(\frac{r\tilde{h}(ir)/\sinh(\pi r)}{\left(4r^2 + \frac{X^2}{T^2}\right)^{\frac{1}{4}}\left(rT\xi\left(\frac{X}{2rT}\right)\right)'}\right)' dr
\nonumber\\&= & c_{19}T^{\frac{3}{2}}\int_{\R^+} e\left(\frac{rT}{\pi}\xi\left(\frac{X}{2rT}\right)\right) \left(\frac{\left(\frac{r\tilde{h}(ir)/\sinh(\pi r)}{\left(4r^2 + \frac{X^2}{T^2}\right)^{\frac{1}{4}}\left(rT\xi\left(\frac{X}{2rT}\right)\right)'}\right)'}{\left(rT\xi\left(\frac{X}{2rT}\right)\right)'}\right)' dr
\nonumber\\&\ll &\frac{X^{K-\frac{1}{2}}}{T^{2K-2}}.
\end{eqnarray}
Note that integrating by parts twice is sufficient to break $(-1,1)$. In any case, let us explain the final bound above. In the numerator we start off with $T^{\frac{3}{2}}$ on the outside. After integrating by parts once (the first line), the worst case occurs when we differentiate the $r\tilde{h}(ir)/\sinh(\pi r)$ term (else we gain powers of $X/T$ in the denominator in the final bound). In this case, let us consider the denominator. When $r\ll X/T$ we have an $X^\frac{1}{2} / T^{\frac{1}{2}}$ from the first term, and an $rT^2/X$ from the second. When $r$ is large we have an $r^{\frac{1}{2}}$ from the first term and a $T$ from the second. The numerator decays exponentially and absorbs the $r$ in the denominator when $r$ is small (and this is the only constraint on repeating the integration by parts), since $r\tilde{h}(ir)$ has $M+2$ zeros at $0$. Therefore the bound has moved from the trivial bound of $X^{-\frac{1}{2}}T$ to $\ll T^{\frac{3}{2} + \frac{1}{2} - 2} X^{1 - \frac{1}{2}} + 1 = X^{\frac{1}{2}}$. And indeed this pattern continues --- the dominant part of the integral is that with $r\leq \frac{X}{T}$, due to the exponential decay of $r\tilde{h}(ir)/\sinh(\pi r)$. In this regime integration by parts picks up a factor of $rT^2/X$ in the denominator, with the $r$ absorbed into $r\tilde{h}(ir)$, thus gaining $X/T^2$ in total. We may repeat this as many times as $r\tilde{h}(ir)$ has zeros divided by 2 (since we are also differentiating), which is $M+1$ times. We will choose $K = M$ in any case.

Note that, applying the same procedure, $N_J^{(2)}(X)$ and $N_J^{(3)}(X)$ contribute to lower order (namely, we gain at least factor of $T$ in each case). Therefore, taking $K = M$, our final bound is
\begin{eqnarray}\label{large sum two}
D_J(X)& \ \ll\ & \frac{X^{M - \frac{1}{2}}}{T^{2M-2}} + \frac{T^2}{X^{5/2}}.
\end{eqnarray}
This completes the proof.
\end{proof}

\section{Proof of Theorem \ref{main theorem}}

We can now prove our main result.

\begin{proof}[Proof of Theorem \ref{main theorem}]
We prove part (2) of Lemma \ref{conductor, prime sum, and prime square sum}. Part (3) follows entirely analogously (in fact, we obtain better bounds in this case).

We have already seen that the total mass of the averages is on the order of $T^2$. So it suffices to give a bound of size $o(T^2)$ for
\begin{align}\label{kill this}
\sum_p \frac{\log{p}}{p^{1/2}\log{T}}\hat{\phi}\left(\frac{\log{p}}{2\log{T}}\right)\sum_{u\in \mathcal{M}_1} \frac{h_T(t_u)}{||u||^2} \lambda_p(u).
\end{align}
Applying the Kuznetsov trace formula and using the same arguments used for \eqref{Eisenstein term} gives us that
\begin{align}\label{kill this two}
\sum_{u\in \mathcal{M}_1} \frac{h_T(t_u)}{||u||^2} \lambda_p(u) = c_{20}\sum_{c\geq 1} \frac{S(1,p;c)}{c}\intii J_{2ir}\left(\frac{4\pi\sqrt{p}}{c}\right) \frac{r h_T(r)}{\cosh(\pi r)} dr + O\left(\frac{T\log{T}}{p^{1/2}}\right).
\end{align}
Since $\phi$ has compact support, the sum of the error term over the primes is
\begin{align}
\sum_p \frac{\log{p}}{p^{1/2}\log{T}}\hat{\phi}\left(\frac{\log{p}}{2\log{T}}\right)O\left(\frac{T\log{T}}{p^{1/2}}\right)\ \ll\ T\log{T}.
\end{align}
We split the remaining double sum into three parts as follows.
\begin{align}
&\sum_p \frac{\log{p}}{p^{1/2}\log{T}}\hat{\phi}\left(\frac{\log{p}}{2\log{T}}\right)\sum_{c\geq 1} \frac{S(1,p;c)}{c}\intii J_{2ir}\left(\frac{4\pi\sqrt{p}}{c}\right) \frac{r h_T(r)}{\cosh(\pi r)} dr
\nonumber\\&= \label{kill this three}\sum_{\frac{T^2}{4\pi^2}\leq p\leq T^{2\eta}} \frac{\log{p}}{p^{1/2}\log{T}}\hat{\phi}\left(\frac{\log{p}}{2\log{T}}\right)\sum_{c\leq \frac{4\pi\sqrt{p}}{T}} \frac{S(1,p;c)}{c}\intii J_{2ir}\left(\frac{4\pi\sqrt{p}}{c}\right) \frac{r h_T(r)}{\cosh(\pi r)} dr
\nonumber\\&\quad+ \sum_{\frac{T^2}{4\pi^2}\leq p\leq T^{2\eta}} \frac{\log{p}}{p^{1/2}\log{T}}\hat{\phi}\left(\frac{\log{p}}{2\log{T}}\right)\sum_{c > \frac{4\pi\sqrt{p}}{T}} \frac{S(1,p;c)}{c}\intii J_{2ir}\left(\frac{4\pi\sqrt{p}}{c}\right) \frac{r h_T(r)}{\cosh(\pi r)} dr
\nonumber\\&\quad+ \sum_{p < \frac{T^2}{4\pi^2}} \frac{\log{p}}{p^{1/2}\log{T}}\hat{\phi}\left(\frac{\log{p}}{2\log{T}}\right)\sum_{c\geq 1} \frac{S(1,p;c)}{c}\intii J_{2ir}\left(\frac{4\pi\sqrt{p}}{c}\right) \frac{r h_T(r)}{\cosh(\pi r)} dr.
\end{align}

We apply the Weil bound for Kloosterman sums to each: $|S(1,p;c)| \ll c^{1/2+\epsilon}$. Moreover, we apply Proposition \ref{large bound} to the integrals in the first sum of \eqref{kill this three}, and Proposition \ref{small bound} to those in the second and third sums of \eqref{kill this three}. We get that \eqref{kill this three} is bounded by
\begin{align}\label{kill this four}
&T^{-2M+2}\sum_{\frac{T^2}{4\pi^2}\leq p\leq T^{2\eta}} \frac{p^{\frac{M}{2}-\frac{3}{4}}\log{p}}{\log{T}}\hat{\phi}\left(\frac{\log{p}}{2\log{T}}\right)\sum_{c\leq \frac{4\pi\sqrt{p}}{T}} c^{-M+\eps}
\nonumber\\+\hspace{.2cm}& T^2\sum_{\frac{T^2}{4\pi^2}\leq p\leq T^{2\eta}} \frac{\log{p}}{p^{\frac{7}{4}}\log{T}}\hat{\phi}\left(\frac{\log{p}}{2\log{T}}\right)\sum_{c\leq \frac{4\pi\sqrt{p}}{T}} c^{2+\eps}
\nonumber\\+\hspace{.2cm}& T^{-2}\sum_{\frac{T^2}{4\pi^2}\leq p\leq T^{2\eta}} \frac{\log{p}}{\log{T}}\hat{\phi}\left(\frac{\log{p}}{2\log{T}}\right)\sum_{c > \frac{4\pi\sqrt{p}}{T}} c^{-\frac{3}{2} + \eps}
\nonumber\\+\hspace{.2cm}& T^{-2}\sum_{p < \frac{T^2}{4\pi^2}} \frac{\log{p}}{\log{T}}\hat{\phi}\left(\frac{\log{p}}{2\log{T}}\right)\sum_{c\geq 1} c^{-\frac{3}{2} + \eps}.
\end{align}
Applying Chebyshev's prime number theorem estimates, \eqref{kill this four} is
\begin{align}
&\ll\ \frac{T^{\left(M + \frac{1}{2}\right)\eta - 2M + 2 + \eps}}{\log{T}} + \frac{T^{\frac{3}{2}\eta - 1 + \eps}}{\log{T}} + \frac{T^{\frac{3}{2}\eta-\frac{3}{2} + \eps}}{\log{T}} + \frac{T^\eps}{\log{T}},
\end{align}
which is of the desired shape (that is, $o(T^2)$) when $\eta < 2 - \frac{1}{M + \frac{1}{2}}$, completing the argument.
\end{proof}

\section*{Acknowledgements}

The first-named author was partially supported by NSF grant DMS0850577 and the second-named author by NSF grants DMS0970067 and DMS1265673. It is a pleasure to thank Andrew Knightly, Peter Sarnak, and our colleagues from the Williams College 2011 and 2012 SMALL REU programs for many helpful conversations.

\nocite{*}

\ \\


\begin{thebibliography}{0}

\bibitem{AAILMZ}
L. Alpoge, N. Amersi, G. Iyer, O. Lazarev, S. J. Miller and L. Zhang,  \emph{Maass waveforms and low-lying zeros} (2013), preprint.

\bibitem{AS}
M. Abramowitz and I. A. Stegun, \emph{Handbook of Mathematical Functions with Formulas, Graphs, and Mathematical Tables}, 9th printing, New York: Dover, 1972.

\bibitem{BSD1}
B. Birch and H. Swinnerton-Dyer, \emph{Notes on elliptic
curves. I}, J. reine angew. Math. \textbf{212}, $1963$, $7-25$.

\bibitem{BSD2}
B. Birch and H. Swinnerton-Dyer, \emph{Notes on elliptic
curves. II}, J. reine angew. Math. \textbf{218}, $1965$, $79-108$.



%

\bibitem{CI}
J. B. Conrey and H. Iwaniec, \emph{Spacing of Zeros of Hecke L-Functions and the Class Number Problem}, Acta Arith. \textbf{103} (2002) no. 3, 259--312.



\bibitem{DM1}
E. Due\~nez and S. J. Miller, \emph{The low lying zeros of a
$\text{GL}(4)$ and a $\text{GL}(6)$ family of $L$-functions},
Compositio Mathematica \textbf{142} (2006), no. 6, 1403--1425.

\bibitem{DM2}
E. Due\~nez and S. J. Miller, \emph{The effect of
convolving families of $L$-functions on the underlying group
symmetries}, Proceedings of the London Mathematical Society, 2009; doi: 10.1112/plms/pdp018.

\bibitem{Du}
T.M. Dunster, \emph{Bessel Functions of Purely Imaginary Order, with an Application to Second-Order Linear Differential Equations Having a Large Parameter}, SIAM Journal of Mathematical Analysis, \textbf{21} (1990), no. 4.

\bibitem{FiMi}
D. Fiorilli and S. J. Miller, \emph{Surpassing the Ratios Conjecture in the 1-level density of Dirichlet $L$-functions} (2013), preprint. \texttt{http://arxiv.org/abs/1111.3896}.


\bibitem{FI}
E. Fouvry and H. Iwaniec, \emph{Low-lying zeros of dihedral
$L$-functions}, Duke Math. J.  \textbf{116} (2003),  no. 2, 189-217.

\bibitem{Gao}
P. Gao, \emph{$N$-level density of the low-lying zeros of
quadratic Dirichlet $L$-functions}, Ph.~D thesis, University of
Michigan, 2005.

\bibitem{Go}
D. Goldfeld, \emph{The class number of quadratic fields and the conjectures of Birch and Swinnerton-Dyer}, Ann. Scuola Norm. Sup. Pisa (4) \textbf{3} (1976), 623--663.

%

\bibitem{GK}
D. Goldfeld and A. Kontorovich, \emph{On the $\GL(3)$ Kuznetsov formula with applications to symmetry types of families of $L$-functions}, preprint.

\bibitem{GZ}
B. Gross and D. Zagier, \emph{Heegner points and derivatives of $L$-series}, Invent. Math \textbf{84} (1986), 225--320.

\bibitem{Gu}
A. G\"ulo\u{g}lu, \emph{Low-Lying Zeros of Symmetric
Power $L$-Functions}, Internat. Math. Res. Notices 2005, no. 9,
517-550.




\bibitem{HM}
C. Hughes and S. J. Miller, \emph{Low-lying zeros of $L$-functions
with orthogonal symmtry}, Duke Math. J., \textbf{136} (2007), no. 1, 115--172.

\bibitem{HR}
C. Hughes and Z. Rudnick, \emph{Linear Statistics of Low-Lying Zeros of $L$-functions},  Quart. J. Math. Oxford \textbf{54} (2003), 309--333.


\bibitem{Iw1}
H. Iwaniec, \emph{Introduction to the Spectral Theory of Automorphic Forms}, Biblioteca de la Revista Matem\'atica Iberoamericana, 1995.

\bibitem{Iw2}
H. Iwaniec, \emph{Topics in Classical Automorphic Forms}, Graduate
Studies in Mathematics, Vol. 17, AMS, Providence, RI, 1997.

\bibitem{IK}
H. Iwaniec and E. Kowalski, \emph{Analytic Number Theory}, AMS
Colloquium Publications \textbf{53}, AMS, Providence, RI, 2004.

\bibitem{ILS}
H. Iwaniec, W. Luo and P. Sarnak, \emph{Low lying zeros of
families of $L$-functions}, Inst. Hautes �tudes Sci. Publ. Math.
\textbf{91}, 2000, 55--131.

\bibitem{IMT}
G. Iyer, S. J. Miller and N. Triantafillou, \emph{Moment Formulas for Ensembles of Classical Compact Groups} (2013), preprint.

\bibitem{KaSa1}
N. Katz and P. Sarnak, \emph{Random Matrices, Frobenius
Eigenvalues and Monodromy}, AMS Colloquium Publications \textbf{45},
AMS, Providence, $1999$.

\bibitem{KaSa2}
N. Katz and P. Sarnak, \emph{Zeros of zeta functions and symmetries},
Bull. AMS \textbf{36}, $1999$, $1-26$.

%
%

\bibitem{K}
H. Kim, \emph{Functoriality for the exterior square of $GL_2$ and the symmetric fourth of $GL_2$}, Jour. AMS  \textbf{16} (2003), no. 1, 139--183.

\bibitem{KL}
C. Li and A. Knightly, \emph{Kuznetsov's trace formula and the Hecke eigenvalues of Maass forms}, Mem. Amer. Math. Soc., to appear.

\bibitem{KSa}
H. Kim and P. Sarnak, \emph{Appendix: Refined estimates towards the Ramanujan
and Selberg conjectures}, Appendix to \cite{K}.

\bibitem{Liu}
J. Liu, \emph{Lectures On Maass Forms}, Postech, March 25--27, 2007.\hfill \ \\ \texttt{http://www.prime.sdu.edu.cn/lectures/LiuMaassforms.pdf}.

\bibitem{LiuYe1}
J. Liu and Y. Ye, \emph{Subconvexity for Rankin-Selberg $L$-Functions for Maass Forms},
Geom. funct. anal. \textbf{12} (2002), 1296--1323.

\bibitem{LiuYe2}
J. Liu and Y. Ye, \emph{Petersson and Kuznetsov trace formulas}, in Lie groups and automorphic forms (Lizhen Ji, Jian-Shu Li, H. W. Xu and Shing-Tung Yau editors), AMS/IP Stud. Adv. Math. \textbf{37}, AMS, Providence, RI, 2006, pages 147--168.

\bibitem{Mil}
S. J. Miller, \emph{$1$- and $2$-level densities for families of elliptic curves: evidence for the underlying group symmetries}, Compositio
Mathematica \textbf{140} (2004), 952--992.

\bibitem{MilPe}
S. J. Miller and R. Peckner, \emph{Low-lying zeros of number field $L$-functions}, Journal of Number Theory \textbf{132} (2012), 2866--2891.




\bibitem{OS1}
A. E. \"Ozl\"uk and C. Snyder, \emph{Small zeros of quadratic $L$-functions},
Bull. Austral. Math. Soc. \textbf{47} (1993), no. 2, 307--319.

\bibitem{OS2}
A. E. \"Ozl\"uk and C. Snyder, \emph{On the distribution of the
nontrivial zeros of quadratic $L$-functions close to the real axis},
Acta Arith. \textbf{91} (1999), no. 3, 209--228.

\bibitem{RR}
G. Ricotta and E. Royer, \emph{Statistics for low-lying
zeros of symmetric power $L$-functions in the level aspect},
preprint, to appear in Forum Mathematicum.

\bibitem{Ro}
E. Royer, \emph{Petits z\'{e}ros de fonctions $L$
de formes modulaires}, Acta Arith. \textbf{99} (2001),  no. 2,
147-172.

\bibitem{Rub1}
M. Rubinstein, \emph{Evidence for a spectral
interpretation of the zeros of $L$-functions}, P.H.D. Thesis,
Princeton University, 1998.

\bibitem{Rub2}
M. Rubinstein, \emph{Low-lying zeros of $L$--functions
and random matrix theory}, Duke Math. J. \textbf{109} (2001),
147--181.

\bibitem{RS}
Z. Rudnick and P. Sarnak, \emph{Zeros of principal $L$-functions
 and random matrix theory}, Duke Math. J. \textbf{81},
$1996$, $269-322$.

\bibitem{ST}
P. Sarnak and J. Tsimerman, \emph{On Linnik and Selberg's Conjecture about
Sums of Kloosterman Sums}, preprint.

\bibitem{ShTe}
S.-W. Shin and N. Templier, \emph{Sato-Tate theorem for families and low-lying zeros of automorphic $L$-functions}, preprint.

\bibitem{Smi}
R.A. Smith, \emph{The $L^2$ norm of Maass wave functions}, Proc. A.M.S.\ \textbf{82} (1981), no. 2, 179--182.

\bibitem{Ya}
A. Yang, \emph{Low-lying zeros of Dedekind zeta functions attached to cubic number fields}, preprint.

\bibitem{Yo}
M. Young, \emph{Low-lying zeros of families of elliptic curves}, J. Amer. Math. Soc. \textbf{19} (2006), no. 1, 205--250.

\end{thebibliography}
\end{document}